\renewcommand{\mod}[1]{\allowbreak\mkern 12mu{\operator@font mod}\,\,#1}
\pgfplotsset{compat=newest}
\patchcmd\Hy@EveryPageBoxHook{\Hy@EveryPageAnchor}{\Hy@hypertexnamestrue\Hy@EveryPageAnchor}{}{\fail}
\DeclareMathAlphabet{\mathcal}{OMS}{cmsy}{m}{n}
\definecolor{edited}{rgb}{0,0,0}
\definecolor{codegreen}{rgb}{0,0.6,0}
\definecolor{codegray}{rgb}{0.5,0.5,0.5}
\definecolor{codepurple}{rgb}{0.58,0,0.82}
\definecolor{backcolour}{rgb}{0.95,0.95,0.92}
\renewcommand\tableofcontents{%
\chapter*{\contentsname
\@mkboth{%
\MakeUppercase\contentsname}{}}
\@starttoc{toc}%
}}
\titleformat{\part}[display]{\centering\normalfont\sffamily\Huge\bfseries\color{black}}
{\partname \space \thepart}{20pt}{\Huge}}%
\titleformat{\section}{\normalfont\sffamily\Large\bfseries\color{black}}
{\thesection}{1em}{}
\titleformat{\subsection}{\normalfont\sffamily\large\bfseries\color{black}}
{\thesubsection}{1em}{}
\bfseries\color{black}}
\titleformat{\paragraph}{\normalfont\sffamily}
{\paragraph \theparagraph}{1em}{}
\titleformat{\chapter}[frame]
{\normalfont}{\filright\enspace \@chapapp~\thechapter\enspace}
{8pt}{\Large\bfseries\sffamily\filcenter}
\def\boxnumber{chapter}}%
\def\boxnumber{section}}%
\newtheoremstyle{notation}
{\topsep}   
{\topsep}   
{\normalfont}  
{0pt}       
{\scshape} 
{.}         
{5pt plus 1pt minus 1pt} 
{}          
\theoremstyle{notation}
\newtheorem{theorem}{Theorem}[\boxnumber]
\newtheorem{lemma}[theorem]{Lemma}
\newtheorem{corollary}[theorem]{Corollary}
\renewenvironment{proof}[1][\proofname]{%
 \par\pushQED{\qed}\normalfont%
 \topsep6\p@\@plus6\p@\relax%
 \trivlist\item[\hskip\labelsep\itshape\sffamily#1\@addpunct{.}]
 \ignorespaces%
}{%
 \popQED\endtrivlist\@endpefalse%
}%
\newcommand*{\fullref}[1]{\textsf{\hyperref[{#1}]{\autoref*{#1}}}}
\newcommand{\bF}{\mathbb{F}}
\newcommand{\cI}{\mathcal{I}}
\newcommand{\cL}{\mathcal{L}}
\newcommand{\cP}{\mathcal{P}}
\let\originalleft\left
\let\originalright\right
\renewcommand{\left}{\mathopen{}\mathclose\bgroup\originalleft}
\renewcommand{\right}{\aftergroup\egroup\originalright}
\newcommand{\paren}[1]{\left( #1 \right)}
\newcommand{\ang}[1]{\left\langle #1 \right\rangle}
\newcommand{\defi}{\vcentcolon=}
\newcommand{\idgp}{\vmathbb{1}}
\newcommand{\subgp}{\leq}
\newcommand{\nsubgp}{\trianglelefteq}
\newcommand{\soc}[1]{\textrm{soc}(#1)}
\newcommand{\Centraliser}[2]{\textbf{C}_{#1}\left(#2\right)}
\newcommand{\Centre}[1]{\textbf{Z}\left(#1\right)}
\newcommand{\Normaliser}[2]{\textbf{N}_{#1}(#2)}
\def\PSL{\textsf{PSL}}
\def\PSU{\textsf{PSU}}
\def\PGL{\textsf{PGL}}
\newcommand{\PGLn}[2]{\PGL_{#1}({#2})}
\newcommand{\PSLn}[2]{\PSL_{#1}({#2})}
\newcommand{\PSUn}[2]{\PSU_{#1}({#2})}
\newcommand{\Sz}[1]{\prescript{2}{}{\textrm{B}_2}\paren{#1}}
\newcommand{\Ree}[1]{\prescript{2}{}{\textrm{G}_2}\paren{#1}}
\newcommand{\LargeRee}[1]{\prescript{2}{}{\textrm{F}_4}\paren{#1}}
\def\Aut{\textsf{Aut}}
\renewcommand\subgp{\leqslant}
\renewcommand\geq{\geqslant}
\author[]{Vishnuram Arumugam}
\author[]{John Bamberg}
\author[]{Michael Giudici}
\affil[]{\small Center for the Mathematics of Symmetry and Computation,
  University of Western Australia, Perth 6009, Australia\\
\href{mailto:Vishnuram.Arumugam@uwa.edu.au}{Vishnuram.Arumugam@uwa.edu.au}, 
\href{mailto:John.Bamberg@uwa.edu.au}{John.Bamberg@uwa.edu.au} and
\href{mailto:Michael.Giudici@uwa.edu.au}{Michael.Giudici@uwa.edu.au}}
\title{\sc Low rank groups of Lie type acting point and line-primitively on 
finite generalised quadrangles}
\begin{document}
\hypersetup{linkcolor=WildStrawberry}
\pagenumbering{arabic}
\setcounter{page}{1}
\pagestyle{plain}

\maketitle
\vspace*{-0.5cm}

\abstract{
Suppose we have a finite thick 
generalised quadrangle whose automorphism group $G$ acts
primitively on both the set of points and the set of lines. Then $G$ must be
almost simple. In this paper, we show that $\soc{G}$ cannot be isomorphic to
$\Sz{2^{2m+1}}$ or $\Ree{3^{2m+1}}$ where $m$ is a positive integer.  
}

\section{Introduction}
A generalised polygon is a type of incidence structure introduced by Jacques
Tits (1959) 
\cite{tits} to realise groups of Lie type as symmetries (more precisely,
automorphism groups) of geometric objects. Let $n$ be a positive integer. 
A generalised $n$-gon is an incidence 
structure whose incidence graph is a bipartite graph with diameter $n$ and 
girth $2n$. We say that a generalised
$n$-gon has order $(s,t)$ if every line has $s+1$ points incident with it and
every point is incident with $t+1$ lines. Furthermore, a generalised $n$-gon 
is said to be thick if it has order $(s,t)$ where $s,t > 1$. Feit and Higman
showed in \cite{feit} that thick generalised $n$-gons exist if and only
if \textcolor{edited}{$n \in \{3,4,6,8\}$. Our focus from now will only be on thick generalised
polygons. Therefore, we refer to a thick generalised polygon as simply a 
generalised polygon.} The examples of 
generalised polygons that arise from 
groups of Lie type via the construction of Tits are called \emph{classical}. 
Since then, many non-classical examples of projective planes (generalised 
$3$-gons) and generalised quadrangles ($4$-gons) have been found 
\cite[Section 3.7]{maldy}. In 
the case of generalised hexagons ($6$-gons) and generalised octagons
($8$-gons), the only known examples are the classical ones. \\

Many attempts have been made to construct new
examples. These attempts involve introducing various symmetry conditions on
the automorphism groups and analysing the possible groups acting on
generalised polygons with these symmetry conditions. Buekenhout and Van
Maldeghem (1994) \cite{buek} 
showed that if a group acts distance-transitively on a generalised
$n$-gon ($n \geqslant 4)$, then in fact, it must act point-primitively on that 
generalised $n$-gon. 
We summarise the results on generalised hexagons and
octagons in \Cref{tab:genhex} and the results on generalised quadrangles in
\Cref{tab:genquad}. Here, we let $\Gamma$ be a
generalised $n$-gon (generalised hexagon or octagon in \Cref{tab:genhex} and
generalised quadrangle in \Cref{tab:genquad}) and $G \subgp \Aut(\Gamma)$. 
For a point $\alpha$ in $\Gamma$, we write $G_{\alpha}$ to denote the 
point-stabiliser of $\alpha$ in $G$. Also, $q$ is assumed to be a prime power. 
Finally, the column of assumptions refers to the action of $G$ on $\Gamma$.

\begin{table}[H]
\centering
\begin{tabular}{|l|l|l|}
\hline
\thead{Assumptions} & \thead{Conclusion} & \thead{Reference} \\
\hline
\makecell{Point-primitive, \\ line-primitive \\ and flag-transitive} & 
$G$ is almost simple of Lie type & \cite{schneider} \\
\hline
Point-primitive & $G$ is almost simple of Lie type & \cite{bamberg} \\
\hline
\makecell{Point-primitive and \\ 
$\soc{G} \cong \PSLn{n}{q}$ for $n \geqslant 2$} &
$G_{\alpha}$ acts irreducibly on $V = \bF_q^n$ & \cite{glasbyoct} \\
\hline
\makecell{Point-primitive and \\ 
$\soc{G} \cong \Sz{2^{2m+1}}$, \\ $\Ree{3^{2m+1}}$ or 
$\LargeRee{2^{2m+1}}$} & \makecell{$\soc{G} \cong \LargeRee{2^{2m+1}}$ and 
$\Gamma$ is \\ the classical generalised octagon or its dual} & \cite{morgan} \\
\hline
\end{tabular}
\caption{Summary of results on generalised hexagons and octagons.}
\label{tab:genhex}
\end{table}

\begin{table}[H]
\centering
\begin{tabular}{|l|l|l|}
\hline
\thead{Assumptions} & \thead{Conclusion} & \thead{Reference} \\
\hline
\makecell{Point-primitive and \\ line-primitive} & $G$ is almost simple &
\cite{bgmrs} \\
\hline
\makecell{Point-primitive \\ line-primitive \\ and flag-transitive} & 
$G$ is almost simple of Lie type & \cite{bgmrs} \\
\hline
\makecell{Point-primitive, \\ flag-transitive and \\ $\soc{G} \cong A_n$ with
$n \geqslant 5$} & \makecell{$G \subgp S_6$ and $\Gamma$ is the unique \\ 
generalised quadrangle of order $(2,2)$} & \cite{bgmrs} \\
\hline
Point-primitive & \makecell{$\soc{G}$ is not isomorphic to \\ a sporadic group} 
& \cite{bambergevans} \\
\hline
\makecell{Point-primitive and \\ 
$\soc{G} \cong \PSLn{2}{q}$ for $q \geqslant 4$} &
\makecell{$q = 9$ and \\ $\Gamma$ is the symplectic quadrangle $W(2)$} & 
\cite{fenglu} \\
\hline
\makecell{Point-primitive and \\ 
line-primitive} & \makecell{$\soc{G}$ is not isomorphic to $\PSUn{3}{q}$ \\ 
for $q \geqslant 3$} & \cite{luzhangzou} \\
\hline
\end{tabular}
\caption{Summary of results on generalised quadrangles.}
\label{tab:genquad}
\end{table}

The main theorem in this paper (\Cref{thm:main}) is motivated by the work of 
Morgan and Popiel \cite{morgan} on
generalised hexagons and octagons, where they showed that if $G$ acts
point-primitively on a generalised hexagon or an octagon $\Gamma$ with 
socle $\soc{G}
\cong \Sz{2^{2m+1}}$, $\Ree{3^{2m+1}}$ or $\LargeRee{2^{2m+1}}$, then $\soc{G}
\cong \LargeRee{2^{2m+1}}$ and $\Gamma$ is the classical generalised octagon
or its dual.

\begin{theorem}
\label{thm:main}
Let $m$ be a positive integer. An almost simple group with socle isomorphic to 
$\Sz{2^{2m+1}}$ or $\Ree{3^{2m+1}}$ cannot act primitively on both the set of
points and the set of lines of a generalised quadrangle.
\end{theorem}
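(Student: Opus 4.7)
The plan is to suppose for contradiction that an almost simple group $G$ with socle $S \in \{\Sz{q}, \Ree{q}\}$ acts point-primitively and line-primitively on a finite generalised quadrangle $\Gamma$ of order $(s, t)$, and, by running through all possible point- and line-stabilisers, to show that no such action exists. Write $H := G_\alpha$ and $K := G_L$; both are maximal in $G$ by the primitivity hypotheses. With $v = (s+1)(st+1)$ and $\ell = (t+1)(st+1)$ we have $|G| = |H|\cdot v = |K|\cdot \ell$, so in particular $|H|(s+1) = |K|(t+1)$. Using the duality of generalised quadrangles that swaps $(s, t, H, K) \leftrightarrow (t, s, K, H)$, it suffices to consider each unordered pair $\{H, K\}$ once.

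The key input is the classification of maximal subgroups of $\Sz{q}$ (due to Suzuki \cite{suzuki}) and of $\Ree{q}$ (see \cite{cfsg3}). Both lists are short and consist of Borel subgroups, torus normalisers, normalisers of cyclic Hall subgroups of orders $q \pm 2^{m+1} + 1$ in the Suzuki case and $q \pm 3^{m+1} + 1$ in the Ree case, and subfield subgroups $\Sz{q_0}$ or $\Ree{q_0}$. For each candidate $H$ I would compute $v = [G:H]$ as an explicit polynomial in $q$ and try to factor it as $(s+1)(st+1)$ with $s, t \geqslant 2$. Cyclotomic identities such as $q^2 + 1 = (q + 2^{m+1} + 1)(q - 2^{m+1} + 1)$ in the Suzuki case, together with Zsigmondy's theorem applied to the primes dividing $|S|$, tightly constrain the admissible factorisations. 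The identity $|H|(s+1) = |K|(t+1)$ and the short list of possible orders for $K$ then usually force $K$ to be a specific maximal subgroup, so only a handful of candidate triples $(H, K, (s, t))$ survive. For each survivor I would apply the standard necessary conditions on a finite generalised quadrangle — Higman's inequalities $s \leqslant t^2$ and $t \leqslant s^2$, the divisibility $s + t \mid st(s+1)(t+1)$, and the integrality of the eigenvalue multiplicities of the collinearity graph — to reach a contradiction.

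The main obstacle is expected to lie in the Borel case, since then $v$ is as small as $q^2 + 1$ (Suzuki) or $q^3 + 1$ (Ree), which leaves the greatest room for a valid pair $(s, t)$ and for large partner stabilisers $K$. For such cases I expect to need finer tools: fixed-point-ratio estimates (Liebeck--Saxl \cite{fixpts}, Burness \cite{timfix}) applied to suitable semisimple elements, comparing the algebraic count of fixed points of such an element on the coset space $G/H$ with the geometric count of points fixed by a collineation of a generalised quadrangle of order $(s, t)$. The subfield case is by contrast straightforward, because $[G:\Sz{q_0}]$ and $[G:\Ree{q_0}]$ grow with $q$ faster than Higman's bound permits for $v = (s+1)(st+1)$, so only a short finite list of $q$ remains and can be checked by direct computation.
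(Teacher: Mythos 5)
There is a genuine gap: what you have written is a programme rather than a proof, and the steps that would carry all the weight are exactly the ones deferred to ``I expect'' and ``usually force''. For each maximal subgroup $H$ the equation $(s+1)(st+1)=[G:H]$, subject only to Higman's inequalities and the standard divisibility conditions, is a nontrivial Diophantine problem in the unknowns $s,t$ \emph{and} $q$, and you give no mechanism that resolves it uniformly in $q$. In particular, your claim that the subfield case is straightforward because $[G:\Sz{q_0}]$ ``grows faster than Higman's bound permits'' is unfounded: Higman's inequalities only relate $s$ and $t$ to each other and place no absolute bound on $(s+1)(st+1)$, so no finite list of $q$ emerges. Your assessment of the hard case is also inverted: the Borel (parabolic) case is the easy one, since $\Sz{q}$ and $\Ree{q}$ act $2$-transitively on the cosets of a parabolic subgroup, and a $2$-transitive point action is impossible for a thick generalised quadrangle (a collinear pair would be mapped to a non-collinear pair); no fixed-point-ratio machinery is needed there, and for the remaining stabilisers the proposed comparison of algebraic and geometric fixed-point counts is never shown to produce a contradiction. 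Finally, you quote maximal subgroups of the simple groups while taking stabilisers in an arbitrary almost simple $G$; the reduction to the socle acting primitively on both points and lines itself requires an argument (the paper imports it from Morgan--Popiel).

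The missing idea, which is how the paper actually closes every non-parabolic case at once, is a fixed-substructure argument rather than a factorisation of $[G:H]$. One chooses an involution (Suzuki case) or a non-real element $g$ of order $3$ (Ree case); control of conjugacy classes in the maximal subgroups shows $g$ may be assumed to lie in a point-stabiliser and a line-stabiliser simultaneously, and that $g^G\cap G_\alpha=g^{G_\alpha}$, so $\Centraliser{G}{g}$ is transitive on the fixed points and on the fixed lines of $g$. Hence the fixed substructure is itself a generalised quadrangle of some order $(s',t')$, and centraliser computations show its numbers of points and lines are $2^a$ and $2^b$ (Suzuki), respectively $3^a$ and $3^b$ (Ree). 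An elementary lemma then shows $(s'+1)(s't'+1)$ and $(t'+1)(s't'+1)$ cannot both be powers of an odd prime, and in the $2$-power case forces $s'=1$ or $t'=1$ with exponent relations whose parities are incompatible with the values of $a,b$ arising from the stabilisers. Without this (or a fully worked replacement for each family of stabilisers), your outline does not reach a contradiction.
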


The case where $\soc{G} \cong \LargeRee{2^{2m+1}}$ is not included in this
paper as it still requires further analysis and the techniques needed to 
classify generalised quadrangles in this scenario may be different.


\section{Preliminaries}
We develop some preliminary definitions and results from incidence geometry 
and group theory.

\subsection{Incidence Geometry}
An incidence geometry is a triple $(\cP,\cL,\cI)$ where $\cP$ is called the
set of points, $\cL$ is the set of lines disjoint from $\cP$ and 
$\cI \subseteq \cP \times \cI$ is the incidence relation. We say that a point 
$\alpha \in \cP$ is incident with a
line $L \in \cL$ if $(\alpha,L) \in \cI$. This pair $(\alpha,L)$ is called a
flag. We say two points $\alpha, \beta \in \cP$ are collinear if there exists 
a line $L \in \cL$ such that $(\alpha,L) \in \cI$ and $(\beta,L) \in \cI$. 
Finally, the dual of an incidence geometry $(\cP,\cL,\cI)$ is the
incidence geometry $(\cP^{\prime},\cL^{\prime},\cI^{\prime})$ where
the set of points $\cP^{\prime} = \cL$, the set of lines $\cL^{\prime} = \cP$ 
and the incident pair $(L,\alpha) \in \cI^{\prime}$ precisely when 
$(\alpha,L) \in \cI$. \\ 

We may also view an incidence structure as a graph. More precisely, given an
incidence structure $\Gamma = (\cP,\cL,\cI)$, we construct a graph called the
\emph{incidence graph} of $\Gamma$ in the following way: we take the vertex
set to be $\cP \cup \cL$ and we join an edge between $\alpha \in \cP$ and 
$L \in \cL$ if $(\alpha,L) \in \cI$. Note that the disjointness of $\cP$ 
and $\cL$ ensures that the incidence graph does not contain loops. Finally, an
automorphism of $\Gamma$ is a bijective map $\theta: \cP \cup \cL \rightarrow 
\cP \cup \cL$ that sends points to points and lines to lines as well as 
preserving the incidence relation. 
The group of automorphisms of $\Gamma$ is denoted by $\Aut(\Gamma)$. 
While we have defined generalised polygons via this incidence graph, we will
often think of them as incidence geometries. For more
information on incidence structures and generalised polygons, we refer to
\cite{kiss,payne,maldy}. \\

From here on, a generalised quadrangle is assumed to be finite, \emph{i.e.},
it has a finite set of points and set of lines. Furthermore, a generalised
quadrangle with order $(s,t)$ has $(s+1)(st+1)$ points and $(t+1)(st+1)$
lines \cite[(1.2.1)]{payne}.

\begin{lemma}[{\cite[Corollary 2.3]{fenglu}}]
\label{lem:suzsub}
Let $G$ be a group acting on a generalised quadrangle $\Gamma$ and suppose $g
\in G$. 
Let $\Gamma_g = (\cP_g,\cL_g,\cI_g)$ be the fixed substructure of $g$. If 
$|\cP_g| \geqslant 2$, $|\cL_g| \geqslant 2$ and $\Gamma_g$ admits an
automorphism group $H$ that is transitive on both points and lines, then
$\Gamma_g$ is a generalised quadrangle of order $(s^{\prime},t^{\prime})$ for
some positive integers $s^{\prime}$ and $t^{\prime}$.
\end{lemma}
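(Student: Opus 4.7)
The approach is to verify directly, from the quadrangle axiom of the ambient $\Gamma$, that $\Gamma_g$ is itself a generalised quadrangle. The plan has three steps: first, establish constancy of the incidence counts; second, lift the generalised quadrangle axiom to $\Gamma_g$; and third, rule out the degenerate orders $s'=0$ and $t'=0$. The first step is immediate: the transitivity of $H$ on both $\cP_g$ and $\cL_g$ forces integers $s'+1$ and $t'+1$ such that every line of $\cL_g$ is incident with exactly $s'+1$ points of $\cP_g$, and every point of $\cP_g$ is incident with exactly $t'+1$ lines of $\cL_g$.

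For the second step, take $p \in \cP_g$ and $L \in \cL_g$ with $p$ not incident with $L$. In $\Gamma$ there is a unique line $M$ through $p$ meeting $L$. Since $g$ fixes $p$ and $L$, the image $g(M)$ also passes through $p$ and meets $L$, so by uniqueness $g(M)=M$, hence $M \in \cL_g$. Its intersection with $L$, being the unique common point of two $g$-invariant lines, is also fixed and therefore lies in $\cP_g$. This gives the quadrangle axiom inside $\Gamma_g$ (and uniqueness in $\Gamma_g$ is inherited from uniqueness in $\Gamma$).

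I expect the third step to be the main obstacle, since it is the only place where the hypothesis $|\cP_g|,|\cL_g|\geqslant 2$ is used in an essential way. First, $s'+1\geqslant 1$ follows from step two: given any $p \in \cP_g$ and any $L \in \cL_g$, either $p$ is already incident with $L$, or step two produces a fixed point on $L$; similarly $t'+1 \geqslant 1$. To rule out $t'+1=1$, I would suppose that each fixed point lies on a unique fixed line, choose distinct $L_1,L_2 \in \cL_g$ and a fixed point $p$ on $L_1$, and note that $p$ is not incident with $L_2$ (otherwise $p$ would lie on two fixed lines). Then step two applied to $(p,L_2)$ forces the unique fixed line through $p$, namely $L_1$, to meet $L_2$ at a fixed point $q$, and $q$ then lies on two distinct fixed lines $L_1,L_2$, contradicting $t'+1=1$. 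A dual argument using $|\cP_g|\geqslant 2$ rules out $s'+1=1$, completing the verification that $\Gamma_g$ is a generalised quadrangle of order $(s',t')$ with $s',t'\geqslant 1$.
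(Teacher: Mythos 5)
Your argument is correct, but it is worth noting that the paper does not prove this statement at all: it is quoted verbatim as \cite[Corollary 2.3]{fenglu}, and Feng and Lu in turn obtain it from the standard classification of the fixed-element structures of an automorphism of a generalised quadrangle (in the spirit of Payne--Thas), where the transitivity hypothesis and the conditions $|\cP_g|,|\cL_g|\geqslant 2$ are used to eliminate all the degenerate possibilities (empty or pairwise non-collinear point sets, stars, trees) and leave only the subquadrangle case. Your route is a direct, self-contained verification of the Payne--Thas axioms for $\Gamma_g$: transitivity of $H$ is used exactly once, to make the incidence parameters $s'+1$ and $t'+1$ constant; the quadrangle axiom descends to the fixed substructure because the unique line--point pair joining a fixed antiflag is itself fixed (and uniqueness, as well as the fact that two distinct lines share at most one point, is inherited from $\Gamma$); and the hypotheses $|\cP_g|\geqslant 2$, $|\cL_g|\geqslant 2$ enter only to exclude $s'=0$ or $t'=0$, which your step-three argument does correctly, yielding $s',t'\geqslant 1$ (note the conclusion allows non-thick orders, i.e.\ grids and dual grids, which is all that is claimed). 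The citation buys brevity and leans on known structure theory; your proof buys independence from that classification at the cost of a page of elementary checking, and would be a legitimate substitute. The only cosmetic point is that you should say explicitly that the three properties you verify (constant line size at least two, constant point degree at least two, unique collinear pair for each non-incident point--line pair) constitute the definition of a generalised quadrangle of order $(s',t')$ being used, since the paper's headline definition is phrased via diameter and girth of the incidence graph; the equivalence for $s',t'\geqslant 1$ is standard but deserves a sentence or a reference to \cite{payne}.
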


The next number-theoretic lemma concerns the solutions of $s$ and $t$ given a
certain number of points and lines.  
\begin{lemma}
\label{lem:suzupts}
Let $a,b,s,t$ be positive integers and $p$ be a prime. If  
\begin{equation}
(s+1)(st+1) = p^a \ \textrm{and} \ (t+1)(st+1) = p^b,
\label{eqn:suzupts}
\end{equation}
then $p = 2$. Furthermore, we have: $(s = 1 \ \textrm{and} \ b = a/2+1)$ or
$(t = 1 \ \textrm{and} \ a = b/2+1)$.
\end{lemma}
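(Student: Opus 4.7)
The plan is to exploit unique factorization. Since $(s+1)(st+1) = p^a$ is a prime power, both $s+1$ and $st+1$ must individually be powers of $p$, and likewise $t+1$ is a power of $p$. So I write $s+1 = p^c$, $t+1 = p^d$, $st+1 = p^e$, with $a = c+e$, $b = d+e$, and $c, d, e \geq 1$ (because $s, t \geq 1$ forces each of $s+1, t+1, st+1 \geq 2$). Substituting $s = p^c - 1$ and $t = p^d - 1$ into the third equation collapses the two given relations into the single Diophantine identity
\begin{equation*}
p^{c+d} - p^c - p^d + 2 = p^e.
\end{equation*}

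The first step is to extract $p = 2$ by reducing this identity modulo $p$: since $c, d, e \geq 1$, every term but $2$ vanishes, so $p \mid 2$. With $p = 2$ in hand, the crux is a parity argument that rules out $c \geq 2$ and $d \geq 2$ simultaneously. Dividing the identity by $2$ gives
\begin{equation*}
2^{c+d-1} - 2^{c-1} - 2^{d-1} + 1 = 2^{e-1},
\end{equation*}
whose left-hand side is odd (three even terms and a $+1$). This forces $e = 1$, hence $st = 1$ and so $s = t = 1$, which contradicts $c \geq 2$. Therefore $c = 1$ or $d = 1$, i.e., $s = 1$ or $t = 1$.

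Finally, in the case $s = 1$ we have $st+1 = t+1$, so writing $t+1 = 2^k$ gives $(s+1)(st+1) = 2^{k+1}$ and $(t+1)(st+1) = 2^{2k}$; the claimed linear relation between the exponents $a$ and $b$ then reads off directly, and the case $t = 1$ follows by symmetry under swapping $(s, a) \leftrightarrow (t, b)$. The only real obstacle is the parity step ruling out $c, d \geq 2$; once $p = 2$ is pinned down, the rest is bookkeeping driven by unique factorization.
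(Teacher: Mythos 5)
Your proof is correct and is essentially the paper's argument in different clothing: writing $s+1=p^c$, $t+1=p^d$, $st+1=p^e$, reducing $p^{c+d}-p^c-p^d+2=p^e$ modulo $p$ to get $p=2$, and then dividing by $2$ and using parity to forbid $c,d\geq 2$ is the same arithmetic as the paper's observation that $p$ divides $\gcd(s+1,t+1,st+1)$, which divides $2$, and that $s+1\equiv t+1\equiv 0\pmod 4$ would force $st+1\equiv 2\pmod 4$, i.e.\ $st+1=2$. One caution on the final bookkeeping: your numbers for $s=1$ give $a=b/2+1$ (and $t=1$ gives $b=a/2+1$), which agrees with what the paper's own proof derives but is the opposite pairing to the one printed in the lemma statement, so the relation does not quite ``read off'' as literally claimed there --- the statement's pairing appears to be a typo, and only the disjunction is used later anyway.
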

\begin{proof}
\textcolor{edited}{Let $d \defi \gcd(s+1,t+1,st+1)$. Then $d$ divides $t(s+1)-(st+1) = t-1$.
Therefore, $d$ divides $t+1-(t-1) = 2$. Hence, $d = 1$ or $d = 2$. Since 
$s+1,t+1,st+1 \geqslant 2$, it follows that $p$ divides $s+1,t+1$ and $st+1$ 
and so, $p$ divides $d$. From which, we deduce that $d = 2$ and $p = 2$. Now,
suppose that $s+1 \neq 2$ and $t+1 \neq 2$. Then $s+1 \equiv 0 \pmod 4$ and
$t+1 \equiv 0 \pmod 4$. Thus, $st+1 \equiv (-1)(-1)+1 \equiv 2 \pmod 4$ and so, $st+1=2$. 
This is a contradiction since $st+1 > s+1 > 2$.} Therefore, $s+1 = 2$
or $t+1 = 2$. Without loss of generality, 
say $t = 1$. Substituting for $t$ and $p$ in \eqref{eqn:suzupts}, we obtain 
\[
(s+1)^2 = 2^a \ \textrm{and} \ 2(s+1) = 2^b.
\]
Hence, $s+1 = 2^{a/2} = 2^{b-1}$, from which, we obtain $b = a/2+1$. If we 
consider the case where $s = 1$, then by the same argument, we obtain $a =
b/2+1$. 
\end{proof}

\subsection{Group Theory}
\subsubsection{Notation}
Let $n$ be a positive integer and $q$ be a prime power. 
We denote the cyclic group of order $n$ by $C_n$, the dihedral group of order
$2n$ by $D_n$, the elementary abelian group of order $q$ by $E_q$. Given two
groups $H$ and $K$, we write $H.K$ to mean an extension of $H$ by $K$,
\emph{i.e.}, a group $G$ with a normal subgroup $M$ where $M \cong H$ and $G/M
\cong K$. When the extension is a split extension, we write $H:K$ (or $H
\rtimes K$). 

\subsubsection{Elementary Results}
In our study of groups of Lie type, we will come across subgroups that are
semidirect products of two groups with coprime order. A classical result that
will be useful in studying these subgroups is the Schur-Zassenhaus Theorem,
which can be found in \cite[Section 3B]{isaacsfin}.
 
\begin{theorem}[Schur-Zassenhaus Theorem]
Let $G$ be a group with a normal subgroup $N$ such that $|N|$ and $|G/N|$ are
coprime.
Then there exists a subgroup $K \subgp G$ such that $G = N \rtimes K$,
\emph{i.e.}, $G = NK$ and $N \cap K = \idgp$. We call $K$ a \emph{complement}
of $N$ in $G$. Moreover, all complements of $N$ in $G$ are conjugate to $K$.  
\end{theorem}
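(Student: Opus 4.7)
The plan is to prove both existence and conjugacy of complements by a double induction on $|G|$, reduced to the case where $N$ is a minimal normal subgroup of $G$.

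\emph{Reduction to minimal normal.} Suppose $N$ contains a proper nontrivial $G$-invariant subgroup $M$. Applying the inductive hypothesis to $G/M$ with normal subgroup $N/M$ (whose index $|G/N|$ is coprime to $|N/M|$), we obtain a complement of the form $K/M$, so that $|K|=|M|\cdot|G/N|$ and $K\cap N=M$. Applying induction again to the strictly smaller group $K$ with normal subgroup $M$ (coprime to $K/M\cong G/N$) yields a complement $H$ to $M$ in $K$. Then $H\cap N=H\cap(K\cap N)=H\cap M=\idgp$ and $|HN|=|H|\,|N|=|G|$, so $H$ is a complement to $N$ in $G$. Hence we may assume $N$ is a minimal normal subgroup.

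\emph{Abelian case via cocycles.} If $N$ is abelian then $N$ is elementary abelian and a $G/N$-module by conjugation. Pick a set-theoretic transversal $\{t_g\}_{g\in G/N}$ with $t_1=1$; then $f(g,h)\defi t_gt_ht_{gh}^{-1}$ defines a $2$-cocycle valued in $N$. Because $\gcd(|N|,|G/N|)=1$, multiplication by $|G/N|$ is an invertible endomorphism of $N$, so averaging $f(g,\cdot)$ over $G/N$ produces a map $c:G/N\to N$ with $\delta c=f$. Replacing $t_g$ by $c(g)^{-1}t_g$ yields a homomorphic section of $G\to G/N$ whose image is the desired complement.

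\emph{Non-abelian case and conjugacy.} A solvable minimal normal subgroup is elementary abelian, so the non-abelian case forces $N$ to be non-solvable, hence of even order; coprimality then makes $|G/N|$ odd, so $G/N$ is solvable by the Feit--Thompson odd-order theorem. Descending through a chief series of $G/N$ and invoking a Frattini argument together with the already-settled abelian case produces a complement. Conjugacy of complements is obtained by essentially the same double induction: two complements $H_1,H_2$ project to complements in each proper quotient, and in the abelian case their conjugacy reduces to the vanishing of $H^1(G/N,N)$, which again follows from coprimality. The main obstacle is the non-abelian existence step, which historically forced Schur--Zassenhaus to rely on the Feit--Thompson theorem; in all applications in this paper, however, $N$ will be a $p$-group, so the elementary cocycle argument alone suffices.
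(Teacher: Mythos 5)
The paper does not prove this statement at all --- it is quoted as a classical result with a citation to Isaacs, \emph{Finite Group Theory}, Section 3B --- so your attempt has to stand on its own, and as written it has real gaps. The most important one is that you have the logical architecture of the classical proof backwards. Existence of a complement does \emph{not} require Feit--Thompson: the standard argument handles non-abelian (minimal normal) $N$ by a Frattini argument on a Sylow $p$-subgroup $P$ of $N$ itself, giving $G = N\,\mathbf{N}_G(P)$; either $\mathbf{N}_G(P)$ is proper and induction applies, or $P \trianglelefteq G$, which contradicts minimality of $N$ (or, without the minimality reduction, lets one pass to the abelian subgroup $\Centre{P}$). Your substitute --- ``descending through a chief series of $G/N$ and invoking a Frattini argument together with the already-settled abelian case'' --- is too vague to verify: you never specify the subgroup to which the Frattini argument is applied, why $G = N\,\mathbf{N}_G(Q)$ for that subgroup $Q$, or how the case $Q \trianglelefteq G$ is absorbed, and these are precisely where the work lies. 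Conversely, the place where Feit--Thompson (or a solvability hypothesis on $N$ or $G/N$) genuinely enters is the \emph{conjugacy} of complements, which you dispatch in one sentence. In the hard case of non-abelian $N$, ``essentially the same double induction'' plus the vanishing of $H^1(G/N,N)$ does not suffice; the standard proof conjugates the Sylow $q$-subgroups $H_1 \cap M$ and $H_2 \cap M$ of the preimage $M$ of a minimal normal subgroup of $G/N$ and then inducts inside $\mathbf{N}_G(Q)$ or in $G/Q$, and none of this appears in your sketch. (Your reduction to minimal normal $N$ is also carried out only for existence, not for conjugacy.)

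Your closing remark is also false, and it matters for this paper: the theorem is applied here through \Cref{lem:schur} and \Cref{cor:conj} with $N \cong C_{q-1}$, $C_{q\pm\sqrt{2q}+1}$, $C_{q\pm\sqrt{3q}+1}$ and $E_4 \times D_{(q+1)/4}$. These are generally not $p$-groups, and the last is not even abelian, so ``the elementary cocycle argument alone'' does not cover the paper's uses --- in particular \Cref{cor:conj} rests on the \emph{conjugacy} of complements for a non-abelian solvable $N$. What is true, and would be the honest scope restriction, is that all these $N$ are solvable, and the conjugacy part of Schur--Zassenhaus for solvable $N$ has an elementary inductive proof (via $N'$ or a characteristic abelian subgroup) that avoids Feit--Thompson entirely; but that argument is not in your write-up.
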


\begin{corollary}
\label{lem:schur}
Let $G = N \rtimes K$ where $\gcd(|N|,|K|) = 1$. Suppose $M \subgp G$
with $|M| = |K|$. Then $M = K^x$ for some $x \in G$. 
\end{corollary}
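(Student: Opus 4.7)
The plan is to verify that $M$ itself is a complement of $N$ in $G$, and then appeal directly to the uniqueness-up-to-conjugacy clause of the Schur--Zassenhaus Theorem already stated.

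First I would analyse the intersection $M \cap N$. Since $M \cap N \subgp N$, its order divides $|N|$, and since $M \cap N \subgp M$, its order divides $|M| = |K|$. The coprimality hypothesis $\gcd(|N|,|K|)=1$ then forces $M \cap N = \idgp$. Next, because $N \nsubgp G$, the product $MN$ is a subgroup of $G$, and the standard order formula gives
\[
|MN| = \frac{|M|\,|N|}{|M \cap N|} = |K|\,|N| = |G|,
\]
so $MN = G$. Combined with $M \cap N = \idgp$, this shows $G = N \rtimes M$, i.e.\ $M$ is itself a complement of $N$ in $G$.

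At this point the result follows immediately: the Schur--Zassenhaus Theorem tells us that any two complements of $N$ in $G$ are conjugate, so there exists $x \in G$ with $M = K^x$, as required. There is no real obstacle here; the only thing to be careful about is checking that the coprimality of $|N|$ and $|M|$ (inherited from $|M|=|K|$) is what forces the intersection to be trivial, which is the single observation that converts the abstract hypothesis $|M|=|K|$ into the structural statement that $M$ is a complement.
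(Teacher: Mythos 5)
Your proposal is correct and follows essentially the same route as the paper: show $M \cap N = \idgp$ by coprimality, use the order formula to get $MN = G$, conclude $M$ is a complement of $N$, and invoke the conjugacy clause of the Schur--Zassenhaus Theorem. The only difference is that you explicitly justify $MN$ being a subgroup via $N \nsubgp G$, a detail the paper leaves implicit.
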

\begin{proof}
Observe that $N \cap M = \idgp$ since they have coprime orders. Thus,
$|NM| = |N||M|/|N \cap M| = |N||K| = |G|$. Hence,
$M$ is a complement of $N$ in $G$. By the Schur-Zassenhaus Theorem,
all complements of $N$ are conjugate and so, $M = K^x$ for some $x \in G$. 
\end{proof}

\begin{corollary}
\label{cor:conj}
Let $G = N \rtimes K$ where $\gcd(|N|,|K|) = 1$ and suppose $g \in G$ such
that $|g|$ divides $|K|$. Then $g \in K^h$ for some $h \in G$. In particular,
the number of conjugacy classes of elements with order $|g|$ in $G$ is at most 
the number of conjugacy classes of elements with order $|g|$ in $K$.
\end{corollary}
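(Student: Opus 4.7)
The plan is to reduce this corollary to Corollary~\ref{lem:schur} applied to a suitable semidirect product sitting inside $G$ which already contains $g$. The key observation is that $|g|$ divides $|K|$ and $\gcd(|N|,|K|)=1$ together force $\gcd(|g|,|N|)=1$, so $\langle g\rangle\cap N=\idgp$. Because $N$ is normal in $G$, the set $H\defi N\langle g\rangle$ is a subgroup of $G$ and $H=N\rtimes\langle g\rangle$, with the two factors of coprime order.

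Next, I would produce a subgroup of $H$ of the same order as $\langle g\rangle$ which is contained in $K$, so that Corollary~\ref{lem:schur} can be applied. Since $G=NK$, write $g=nk$ for some $n\in N$ and $k\in K$; then $k=n^{-1}g\in H$. The natural projection $H\to H/N$ restricts to an isomorphism on $\langle g\rangle$ (as $\langle g\rangle\cap N=\idgp$) and sends $g$ and $k$ to the same coset, so the order of $k$ equals the order of $g$. Hence $\langle k\rangle\le H$ is a subgroup of order $|\langle g\rangle|$, and Corollary~\ref{lem:schur} provides some $x\in H$ with $\langle k\rangle=\langle g\rangle^{x}$. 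Rearranging, $g\in\langle g\rangle=x\langle k\rangle x^{-1}\subseteq xKx^{-1}=K^{x^{-1}}$, which establishes the first assertion with $h=x^{-1}\in G$.

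For the final clause, I would construct an injection from the set of $G$-conjugacy classes of elements of order $|g|$ into the set of $K$-conjugacy classes of elements of the same order: each $G$-class $C$ meets $K$ by the first part, so we may send $C$ to the $K$-class of any fixed representative in $C\cap K$. Injectivity is immediate because two elements of $K$ that are $K$-conjugate are a fortiori $G$-conjugate, so fixed representatives picked from distinct $G$-classes must represent distinct $K$-classes.

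I do not anticipate any genuine obstacle here: the only non-cosmetic step is verifying that $k$ has the same order as $g$, and this is forced by the coprimality of $|g|$ with $|N|$; everything else is a routine bookkeeping application of Corollary~\ref{lem:schur}.
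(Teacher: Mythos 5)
Your proof is correct and takes essentially the same route as the paper's: both pass to the subgroup $M = N\langle g\rangle = N\rtimes\langle g\rangle$ and apply \Cref{lem:schur} (conjugacy of complements) there, the only difference being that the paper produces its order-$|g|$ subgroup of $K\cap M$ via the product formula $|K\cap M| = |K||M|/|G| = |g|$, whereas you exhibit the explicit element $k$ with $g=nk$ and use $\langle k\rangle$. One small point of precision: ``$g$ and $k$ have the same image in $H/N$'' only gives $|g| = |kN| \mid |k|$; for the reverse divisibility you should note that $k\in K$ forces $\gcd(|k|,|N|)=1$ while $|k|$ divides $|H|=|N|\,|g|$, hence $|k|$ divides $|g|$ --- this is the coprimality argument you allude to at the end, and it should be spelled out at that step rather than asserted.
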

\begin{proof}
First, observe that $g$ acts on $N$ by conjugation. 
Since $|g|$ divides $|K|$, it follows that $g \notin N$. 
Hence, we have the subgroup 
$M \defi N \rtimes \ang{g} \subgp G$. Note that $KM =
K(N\rtimes \ang{g}) = KN\ang{g} = G$ and $|G| = |K||M|/|K \cap M|$. Thus, 
\[
|K \cap M| = \frac{|K||M|}{|G|} = \frac{|K||N||\ang{g}|}{|G|} =
\frac{|K||N||g|}{|K||N|} = |g|.
\]
By \Cref{lem:schur}, we deduce that 
$K \cap M$ is a complement of $N$ in $M$. By the
Schur-Zassenhaus Theorem, all complements of $N$ in $M$ are conjugate.
Therefore, $K \cap M = \ang{g}^x$ for some $x \in M$. 
Thus, $\ang{g} \subgp K^{x^{-1}}$, whence, $g \in K^{h}$, where $h = x^{-1}$.
The number of conjugacy classes of elements with order $|g|$ is at most the
number in $K$ because any element of $G$ with order $|g|$ is conjugate to an
element of $K$. 
\end{proof}

\subsubsection{Permutation Groups}
Our investigation involves groups of Lie type acting on generalised
quadrangles. The following lemma provides a formula for calculating the
number of fixed points of an element with respect to a group action. 
\begin{lemma}[Formula for the Number of Fixed Points 
{\cite[Lemma 2.5]{fixpts}}]
\label{lem:fixpts}
Let $G$ be a finite group acting transitively on a set $\Omega$. Let $\alpha
\in \Omega$ and $g \in G$. Then the number of fixed points of $g$, denoted 
$\pi(g)$, is given by
\begin{equation}
\pi(g) = \frac{|\Omega| |g^G \cap G_{\alpha}|}{|g^G|}.
\label{eqn:fixpts}
\end{equation}
\end{lemma}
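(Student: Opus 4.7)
The plan is to prove this classical formula by a straightforward double count applied to the set
\[
S \defi \setcondc{(h,\beta) \in g^G \times \Omega}{h \cdot \beta = \beta}.
\]
Two elementary facts underpin the argument, and I would record them at the start. First, conjugate elements have equal numbers of fixed points: for any $x \in G$, the map $\beta \mapsto x \cdot \beta$ restricts to a bijection from $\Fix(h)$ onto $\Fix(h^x)$, so $\pi(h) = \pi(g)$ whenever $h \in g^G$. Second, since $G$ acts transitively on $\Omega$, the stabiliser $G_\beta$ is conjugate to $G_\alpha$ for every $\beta \in \Omega$; conjugation preserves the $G$-conjugacy class $g^G$ setwise, so $|g^G \cap G_\beta| = |g^G \cap G_\alpha|$ for every $\beta$.

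With these in hand, I would count $|S|$ in two ways. Fixing the first coordinate, for each $h \in g^G$ the number of admissible $\beta$ is $\pi(h) = \pi(g)$, yielding $|S| = |g^G|\,\pi(g)$. Fixing the second coordinate, for each $\beta \in \Omega$ the number of admissible $h$ is the number of elements of $g^G$ lying in $G_\beta$, namely $|g^G \cap G_\beta| = |g^G \cap G_\alpha|$, so $|S| = |\Omega|\cdot|g^G \cap G_\alpha|$. Equating the two expressions and dividing through by $|g^G|$ gives \eqref{eqn:fixpts}.

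There is no serious obstacle here: the result is essentially a weighted form of the orbit-counting lemma restricted to a single conjugacy class, and the only subtlety is invoking transitivity at the right moment to replace $|g^G \cap G_\beta|$ by the single quantity $|g^G \cap G_\alpha|$. I would also remark that the denominator $|g^G|$ is nonzero since $g^G$ contains at least $g$ itself, so the division in \eqref{eqn:fixpts} is well-defined.
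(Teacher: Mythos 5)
Your proof is correct and complete. Note that the paper does not actually prove this lemma: it is quoted directly from Liebeck and Saxl \cite[Lemma 2.5]{fixpts} with no argument supplied. Your double count of the incidence set $\set{(h,\beta) \in g^G \times \Omega : h\cdot\beta = \beta}$ is the standard derivation, and both supporting observations are handled properly --- conjugate elements have equally many fixed points via the bijection $\beta \mapsto x\cdot\beta$, and transitivity together with the conjugation-invariance of $g^G$ gives $|g^G \cap G_\beta| = |g^G \cap G_\alpha|$ for all $\beta$. Nothing is missing; this is a perfectly good self-contained substitute for the citation.
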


\textcolor{edited}{
\begin{lemma}[{\cite[Lemma 2.4]{luzhangzou}}]
\label{lem:cgtranss}
Let $G$ be a group acting transitively on a set $\Omega$ and $g$ be a
non-identity element in $G$. Consider a point $\alpha \in \Omega_g$, where
$\Omega_g$ is the set of fixed points of $g$. Then $\Centraliser{G}{g}$ acts
transitively on $\Omega_g$ if and only if $g^G \cap G_{\alpha} =
g^{G_{\alpha}}$, \emph{i.e.}, the conjugacy class of $g$ in $G$ does not split
into multiple conjugacy classes in $G_{\alpha}$. In this case, $|\Omega_g| =
|\Centraliser{G}{g} : \Centraliser{G}{g} \cap G_{\alpha}|$. 
\end{lemma}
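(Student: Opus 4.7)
The plan is to combine the fixed-point formula from \Cref{lem:fixpts} with an orbit-stabiliser calculation for the action of $\Centraliser{G}{g}$ on $\Omega_g$. I would first check that this action is well-defined: if $\beta \in \Omega_g$ and $c \in \Centraliser{G}{g}$, then $(\beta^c)^g = \beta^{cg} = \beta^{gc} = \beta^c$, so $\beta^c \in \Omega_g$. Since $\alpha \in \Omega_g$, orbit-stabiliser gives that the $\Centraliser{G}{g}$-orbit of $\alpha$ has size $|\Centraliser{G}{g} : \Centraliser{G}{g} \cap G_\alpha|$, so $\Centraliser{G}{g}$ acts transitively on $\Omega_g$ if and only if $|\Omega_g|$ equals this index.

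Next I would compute $|\Omega_g|$ using \Cref{lem:fixpts}, namely $|\Omega_g| = |\Omega| \cdot |g^G \cap G_\alpha|/|g^G|$. Substituting $|\Omega| = |G|/|G_\alpha|$ and $|g^G| = |G|/|\Centraliser{G}{g}|$ simplifies this to $|\Omega_g| = |\Centraliser{G}{g}| \cdot |g^G \cap G_\alpha|/|G_\alpha|$. Equating this with $|\Centraliser{G}{g}|/|\Centraliser{G}{g} \cap G_\alpha|$ and cancelling, transitivity of $\Centraliser{G}{g}$ on $\Omega_g$ becomes equivalent to $|g^G \cap G_\alpha| = |G_\alpha : \Centraliser{G}{g} \cap G_\alpha| = |G_\alpha : \Centraliser{G_\alpha}{g}| = |g^{G_\alpha}|$, where the middle equality uses $\Centraliser{G_\alpha}{g} = \Centraliser{G}{g} \cap G_\alpha$.

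To finish, note that because $\alpha \in \Omega_g$ forces $g \in G_\alpha$, we have $g^{G_\alpha} \subseteq g^G \cap G_\alpha$, and the latter set is in general a disjoint union of $G_\alpha$-conjugacy classes. Therefore the equality of cardinalities $|g^{G_\alpha}| = |g^G \cap G_\alpha|$ is equivalent to the equality of sets $g^{G_\alpha} = g^G \cap G_\alpha$, which is precisely the non-splitting condition in the statement. The final assertion $|\Omega_g| = |\Centraliser{G}{g} : \Centraliser{G}{g} \cap G_\alpha|$ is then just the orbit size of $\alpha$ under the now-transitive action. The whole argument is essentially bookkeeping; the only obstacle is keeping the chain of index rearrangements straight and correctly identifying $\Centraliser{G_\alpha}{g}$ with $\Centraliser{G}{g} \cap G_\alpha$.
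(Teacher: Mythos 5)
Your proof is correct, and it is worth noting that the paper itself offers no proof of this lemma: it is quoted verbatim from the cited reference, so there is no internal argument to compare against. Your derivation is a clean, self-contained route: check that $\Centraliser{G}{g}$ preserves $\Omega_g$, use orbit--stabiliser to identify the orbit of $\alpha$ as having size $|\Centraliser{G}{g} : \Centraliser{G}{g} \cap G_{\alpha}|$, and then use the fixed-point formula of \Cref{lem:fixpts} to rewrite $|\Omega_g|$ as $|\Centraliser{G}{g}|\,|g^G \cap G_{\alpha}|/|G_{\alpha}|$, so that transitivity is equivalent to $|g^G \cap G_{\alpha}| = |g^{G_{\alpha}}|$, hence (by the inclusion $g^{G_{\alpha}} \subseteq g^G \cap G_{\alpha}$ and finiteness) to equality of the two sets. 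All the index manipulations and the identification $\Centraliser{G_{\alpha}}{g} = \Centraliser{G}{g} \cap G_{\alpha}$ are correct. A common alternative proof, closer in spirit to the source, sets up a direct correspondence between the $\Centraliser{G}{g}$-orbits on $\Omega_g$ and the $G_{\alpha}$-classes contained in $g^G \cap G_{\alpha}$ (sending $\beta = \alpha^x \in \Omega_g$ to the $G_{\alpha}$-class of $g^{x^{-1}}$), which gives the equivalence without invoking the counting formula; your version instead leans on \Cref{lem:fixpts}, which the paper has already recorded, so it fits the surrounding text well. The only implicit hypothesis is finiteness of $G$ (needed both for \Cref{lem:fixpts} and for passing from equal cardinalities to equal sets), which is harmless in the context of this paper.
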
}


\section{Suzuki Groups}
\subsection{Structure of Suzuki Groups}
In this subsection, we provide some background information on the family of 
Suzuki groups. Throughout this section, we denote a Suzuki group by $\Sz{q}$, 
where $q = 2^{2m+1}$ for some positive integer $m$. We start with the
following result which can be found in \cite[Theorem 9]{suzuki}. 
\begin{theorem}[Maximal Subgroups of a Suzuki Group]
\label{thm:suzukigroups}
Let $G = \Sz{q}$ and $H$ be a maximal subgroup of $G$. Then $H$ is isomorphic
to one of the following:
\begin{enumerate}[(i)]
\item $M \cong E_q.E_q.C_{q-1}$;
\item $B_0 = \Normaliser{G}{A_0} \cong D_{q-1}$, where $A_0 \cong C_{q-1}$;
\item $A_1 \cong C_{q+\sqrt{2q}+1}$, $A_2 \cong C_{q-\sqrt{2q}+1}$;
\item $B_i = \Normaliser{G}{A_i} \cong C_{q\pm\sqrt{2q}+1} : C_4$ for $i \in
\{1,2\}$; and 
\item $N_0 \cong \Sz{q_0}$, where $q_0 = 2^{n_0} > 2$ and $q = q_0^{r_0}$ for
some prime $r_0$.
\end{enumerate}
Moreover, there is only one conjugacy class of each type of maximal subgroup.
\end{theorem}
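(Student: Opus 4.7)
The plan is to follow Suzuki's original strategy, which exploits the natural $2$-transitive permutation action of $G = \Sz{q}$ on the Suzuki--Tits ovoid $\Omega$ of $q^2+1$ points, realised inside $\SLn{4}{q}$ as the stabiliser of a suitable non-degenerate form. From this doubly transitive representation the subgroups in (i) and (ii) follow immediately: the stabiliser of a point of $\Omega$ is a Borel subgroup of structure $E_q.E_q.C_{q-1}$, and the pointwise stabiliser of an unordered pair of points is a cyclic torus $A_0 \cong C_{q-1}$, whose normaliser in $G$ is $D_{q-1}$. Because $G$ is $2$-transitive on $\Omega$, both of these subgroups are automatically maximal (the point stabiliser is a maximal parabolic, and $B_0$ is the normaliser of the only $2$-point stabiliser up to conjugacy).

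Next I would exploit the order factorisation $|G| = q^2(q-1)(q+\sqrt{2q}+1)(q-\sqrt{2q}+1)$, whose last three factors are pairwise coprime. Applying Hall's theorem in the coprime setting, I obtain cyclic Hall subgroups of orders $q-1$, $q+\sqrt{2q}+1$, $q-\sqrt{2q}+1$, and these are precisely the three types of maximal torus of $G$. For the two non-split tori $A_1, A_2$ I would compute their normalisers by identifying a Weyl-type element of order $4$ acting on each via inversion and a Frobenius-like twist, yielding the semidirect products $A_i {:} C_4$ in (iii) and (iv). The subfield subgroups in (v) arise formally from the ring inclusion $\bF_{q_0} \hookrightarrow \bF_q$ whenever $q = q_0^{r_0}$ with $r_0$ prime and $q_0 > 2$, via the matrix realisation of $\Sz{q}$; maximality of $\Sz{q_0}$ inside $\Sz{q}$ is then checked by observing that no proper overgroup among the types (i)--(iv) contains it.

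The hard part will be showing that this list is exhaustive, together with the unique conjugacy of each type. I would proceed by a careful inventory of conjugacy classes and element centralisers in $G$. The key structural input is that the centraliser of a non-identity unipotent element lies in a single Sylow $2$-subgroup (and hence in a single Borel), while the centraliser of any non-trivial semisimple element of $G$ is cyclic and lies in exactly one maximal torus. Given any proper subgroup $H \leq G$, I would classify it by the element types it contains: if $H$ contains a full Sylow $2$-subgroup then $H$ lies in a Borel, hence in a conjugate of $M$; if $|H|$ is odd but divisible by a prime dividing one of $q-1$ or $q\pm\sqrt{2q}+1$, then Hall's theorem forces $H$ inside the normaliser of the corresponding torus; and otherwise all elements of $H$ are defined over a proper subfield $\bF_{q_0}$, yielding $H \leq \Sz{q_0}$. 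Uniqueness of each conjugacy class then follows from Sylow's theorem (for $M$ and its Cartan factor) together with the conjugacy of Hall subgroups in the rank-$1$ twisted setting, which is the combinatorial core of Suzuki's argument in \cite{suzuki}.
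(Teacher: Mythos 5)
You should first note that the paper does not prove this statement at all: it is quoted directly from Suzuki's classification (\cite[Theorem 9]{suzuki}, see also \cite[Table 8.16]{maxlowdim}), so a blind ``proof'' is really an attempt to reconstruct Suzuki's own subgroup analysis. Measured against that, your sketch reproduces the broad outline (the $2$-transitive action on the ovoid, the order factorisation $q^2(q-1)(q+\sqrt{2q}+1)(q-\sqrt{2q}+1)$ into pairwise coprime factors, tori and their normalisers, subfield subgroups), but the steps that carry the real weight are either invalid as stated or missing.

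Concretely: (a) Hall's theorem on the existence and conjugacy of Hall subgroups holds only for soluble groups, and $\Sz{q}$ is simple, so you cannot conjure the cyclic subgroups $C_{q-1}$, $C_{q\pm\sqrt{2q}+1}$ or their conjugacy ``in the coprime setting''; Suzuki constructs these tori explicitly and proves that their conjugates, together with the Sylow $2$-subgroups, partition the group --- that partition, and the facts you assert about centralisers (unipotent centralisers inside a unique Sylow $2$-subgroup, semisimple centralisers cyclic and in a unique torus), are the hard content, not available inputs. (b) Your exhaustiveness trichotomy is not exhaustive: a proper subgroup can have even order without containing a full Sylow $2$-subgroup (the subgroups $D_{q-1}$ and $C_{q\pm\sqrt{2q}+1}{:}C_4$ themselves are of this kind), and subgroups mixing unipotent and semisimple elements are exactly where Suzuki's argument (including exceptional character theory and an induction for the subfield case) does its work; the claim ``otherwise all elements of $H$ are defined over a proper subfield'' is an assertion of the conclusion, not an argument. (c) The maximality claims are either unjustified or circular: $2$-transitivity makes the point stabiliser maximal, but the maximality of $B_0=\Normaliser{G}{A_0}\cong D_{q-1}$ is not ``automatic'' (compare $\PSLn{2}{q}$ for small $q$), and verifying that $\Sz{q_0}$ has no proper overgroup by inspecting only types (i)--(iv) presupposes the completeness of the list you are trying to establish. (d) The final assertion of a single conjugacy class of each type is not proved for the subfield subgroups at all, and for the tori it again leans on Hall-type conjugacy that is unavailable here. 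In short, the proposal is a plausible roadmap of Suzuki's theorem but not a proof; the paper's treatment --- citing \cite{suzuki} --- is the appropriate one unless you are prepared to redo that analysis in full.
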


The next lemma shows that all the involutions in a Suzuki group are conjugate.
\begin{lemma}[Conjugacy Class of Involutions]
\label{lem:suzuinvs}
Let $G = \Sz{q}$ and $H$ be a maximal subgroup of $G$. Then $G$ and $H$ both 
have exactly one conjugacy class of involutions.
\end{lemma}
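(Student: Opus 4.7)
The plan is to first establish the claim for $G$ itself and then verify it case-by-case for each class of maximal subgroup listed in \Cref{thm:suzukigroups}.

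For $G = \Sz{q}$, every involution is a $2$-element, hence lies in some Sylow $2$-subgroup, and the Sylow $2$-subgroups are conjugate in $G$. Thus it suffices to fix a Sylow $2$-subgroup $Q$ and show that all involutions of $Q$ form a single orbit under $\Normaliser{G}{Q}$. By \Cref{thm:suzukigroups}(i), I can take $Q$ to be the unique Sylow $2$-subgroup of $M$; it has order $q^2$, and the standard theory of Suzuki $2$-groups gives $Z(Q) \cong E_q$, with $Z(Q) \setminus \{\iden\}$ coinciding with the set of involutions of $Q$. The quotient $M/Q \cong C_{q-1}$ acts faithfully on $Z(Q) \setminus \{\iden\}$, and since $|C_{q-1}| = q - 1 = |Z(Q) \setminus \{\iden\}|$, this action is regular; hence all involutions of $Q$ are conjugate in $M$, and therefore in $G$.

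For the maximal subgroups I would work through each type in \Cref{thm:suzukigroups}. The case of $M$ is immediate from the argument above, applied inside $M$. The group $B_0 \cong D_{q-1}$ is the dihedral group of order $2(q-1)$, and since $q-1$ is odd (as $q$ is even), its $q-1$ reflections form a single conjugacy class under the rotation subgroup $C_{q-1}$. The cyclic groups $A_1 \cong C_{q+\sqrt{2q}+1}$ and $A_2 \cong C_{q-\sqrt{2q}+1}$ both have odd order, since $\sqrt{2q} = 2^{m+1}$, and therefore contain no involutions. For $B_i \cong C_{q \pm \sqrt{2q}+1} : C_4$ with $i \in \{1,2\}$, the normal cyclic subgroup has odd order coprime to $|C_4| = 4$; by \Cref{cor:conj}, any involution of $B_i$ is conjugate into the complement $C_4$, which has a unique involution, so $B_i$ has a single class of involutions. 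Finally, $N_0 \cong \Sz{q_0}$ with $q_0 < q$, and the claim for $N_0$ follows by induction on $q$.

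The main obstacle is the input from the theory of Suzuki $2$-groups needed to identify $Z(Q)$ with the involutions of $Q$ and to verify the faithful torus action; once that is in hand, the remainder is a routine inspection of each maximal subgroup combined with \Cref{cor:conj} and elementary facts about dihedral and cyclic groups.
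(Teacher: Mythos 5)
Your case analysis of the maximal subgroups is essentially the paper's, but the hands-on argument you substitute for the two facts the paper simply cites (Suzuki's Propositions 7 and 8, for $G$ and for the parabolic subgroup) has a genuine flaw at its crucial step. You deduce that $M/Q \cong C_{q-1}$ acts \emph{regularly} on the $q-1$ involutions of $Q$ from faithfulness together with the equality $|C_{q-1}| = |Z(Q)\setminus\{\iden\}|$. That inference is invalid: a faithful action of a group of order $n$ on a set of size $n$ need not be transitive (for instance, $C_6$ acts faithfully on six points with orbits of lengths $3$, $2$ and $1$). What you actually need is semiregularity (fixed-point-freeness), which does hold here because $M = Q : C_{q-1}$ is a Frobenius group with kernel $Q$, so the complement acts without fixed points on $Q\setminus\{\iden\}$; semiregular plus the count $|C_{q-1}| = q-1$ then gives transitivity. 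Alternatively, one can argue linearly: an element of order $2^{2m+1}-1$ in $\GLn{2m+1}{2}$ must act irreducibly on $Z(Q)\cong E_q$, since a decomposition into blocks of sizes $d_i$ would bound its order by $\prod_i\left(2^{d_i}-1\right) < 2^{2m+1}-1$; hence it is a Singer cycle and is transitive on the nonzero vectors. As written, however, the step is a non sequitur. A related caution: invoking ``the standard theory of Suzuki $2$-groups'' for the structure of $Q$ is delicate, because transitivity of a cyclic automorphism group on the involutions is part of the defining setup of a Suzuki $2$-group, so the citation risks circularity; the clean sources are Suzuki's Propositions 1, 7 and 8 for $\Sz{q}$ itself, which is exactly what the paper quotes.

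The remaining cases agree with the paper's treatment: the dihedral case uses that $q-1$ is odd, the Frobenius groups $C_{q\pm\sqrt{2q}+1}:C_4$ are handled by the coprime Schur--Zassenhaus argument of \Cref{cor:conj} (plus the observation that $C_4$ supplies an involution, so ``at most one class'' becomes ``exactly one''), and the subfield case is just the $\Sz{q_0}$ instance of the $G$-argument, so your induction on $q$ is harmless but unnecessary. One further point worth a sentence rather than silence: $A_1$ and $A_2$ have odd order and contain no involutions at all, so the lemma read literally would be false for them; they are not in fact maximal subgroups (each is proper in the corresponding $B_i$), which is why both the paper's proof and the later application ignore them.
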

\begin{proof}
It was shown in \cite[Proposition 7]{suzuki} that all the involutions are 
conjugate in $G$. Furthermore, if $H \cong
E_{q}.E_{q}.C_{q-1}$, then a cyclic subgroup of $H$ of order $q-1$ permutes 
the set of involutions in $H$ transitively \cite[Proposition 8]{suzuki}. 
Therefore, 
we only need to consider the cases:\footnote{Note that the subfield case: 
$H \cong \Sz{q_0}$, we have that $H$ is a Suzuki group and so, it follows from 
\cite[Proposition 7]{suzuki}.} 
$H \cong D_{q-1} = C_{q-1}:C_2$ and 
$H \cong C_{q\pm\sqrt{2q}+1}:C_4$. 
Since $q$ is a power of $2$, we have that $\gcd(q-1,2) = 1$ and
$\gcd(q\pm\sqrt{2q}+1,4) = 1$. Hence, by \Cref{cor:conj}, we conclude 
that all the involutions are conjugate in $G$. 
\end{proof}

The following lemma provides information about the centraliser of an
involution in $G$. 
\begin{lemma}[Centraliser of an Involution]
\label{lem:suzucentinv}
Let $G = \Sz{q}$ with maximal subgroups $H,K \subgp G$ where $H \cong D_{q-1}$
and $K \cong C_{q\pm\sqrt{2q}+1}:C_4$. Let $g \in G$, $h \in H$ and $k \in K$
be involutions. Then 
\[
|\Centraliser{G}{g}| = q^2, \ |\Centraliser{H}{h}| = 2, \
\textrm{and} \ |\Centraliser{K}{k}| = 4.
\]
\end{lemma}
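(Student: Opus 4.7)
The plan is to handle each of the three centraliser computations separately, using the structure of the respective group. By \Cref{lem:suzuinvs}, all involutions in $G$ are conjugate, and similar Schur--Zassenhaus or coprimality arguments show that there is a single conjugacy class of involutions in $H$ and in $K$ as well, so each of the three answers is independent of the chosen involution. For $|\Centraliser{G}{g}| = q^2$, I would appeal to Suzuki's original analysis \cite{suzuki}: the Sylow $2$-subgroup $P$ of $G$ has order $q^2$ with centre $Z(P)$ elementary abelian of order $q$ whose non-identity elements are precisely the involutions of $P$, and distinct Sylow $2$-subgroups of $G$ intersect trivially (that is, $P$ is a TI set in $G$). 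These two facts together force $\Centraliser{G}{z} = P$ for any involution $z \in Z(P)$. Equivalently, one can count $(q^2+1)(q-1)$ involutions in the unique conjugacy class and divide into $|G| = q^2(q-1)(q^2+1)$.

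For $|\Centraliser{H}{h}| = 2$, I would work directly in the standard dihedral presentation of $H \cong D_{q-1}$. Since $q-1$ is odd, the $q-1$ involutions of $H$ are precisely the reflections. A routine computation --- an element $r^j$ of the rotation subgroup commutes with a reflection $sr^i$ only when $r^{-2j} = 1$, which forces $j = 0$ as $q-1$ is odd --- shows that each reflection is centralised in $H$ only by itself and the identity.

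For $|\Centraliser{K}{k}| = 4$, write $K = N \rtimes \langle x \rangle$ with $N$ cyclic of odd order $q \pm \sqrt{2q}+1$, $x$ of order $4$, and $k = x^2$. The key structural fact from \cite{suzuki} is that $\langle x \rangle$ acts faithfully on $N$, so $k$ induces an involutive automorphism of $N$. Since $\Aut(N) \cong (\bZ/|N|\bZ)^{\times}$ contains a unique element of order $2$ when $|N|$ is odd, namely inversion, $k$ must invert $N$. A short computation with a general element $y = n x^i \in K$ then shows $y$ centralises $k$ if and only if $n^2 = 1$, which (again using that $|N|$ is odd) forces $n = 1$; hence $\Centraliser{K}{k} = \langle x \rangle$ is of order $4$. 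The main obstacle in this last step is justifying the faithfulness of the $C_4$-action on $N$, which is a standard structural fact about $\Sz{q}$ but is not made fully explicit in the excerpt.
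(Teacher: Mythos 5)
Your computations for $G$ and $H$ are essentially fine: the paper obtains $|\Centraliser{H}{h}|=2$ differently, by intersecting the $2$-group $\Centraliser{G}{h}$ of order $q^2$ with $H$ of order $2(q-1)$, but your direct dihedral calculation is equally valid. (One small caveat in the $G$-case: TI together with ``all involutions of $P$ lie in $Z(P)$'' only forces $\Centraliser{G}{g}\leq \Normaliser{G}{P}$, which has order $q^2(q-1)$; to get down to $q^2$ you also need the fixed-point-free action of the complement of order $q-1$ on the involutions of $P$, or your counting alternative, which does close the argument.) The genuine gap is in the $K$-case. Your key claim, that $(\bZ/n\bZ)^{\times}$ contains a unique element of order $2$ whenever $n$ is odd, is false unless $n$ is a prime power: if $n=n_1n_2$ with $n_1,n_2>1$ coprime and odd, then $\Aut(C_n)$ contains at least three involutions (invert one factor, the other, or both). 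And $q\pm\sqrt{2q}+1$ need not be a prime power; for $q=2^7$ one has $q+\sqrt{2q}+1=145=5\cdot 29$. Consequently, faithfulness of the $C_4$-action on $N$ does not by itself force $k=x^2$ to invert $N$: if $x^2$ acted as a non-inverting involution of $N$ it would centralise a nontrivial subgroup $N_+\leq N$ and then $|\Centraliser{K}{k}|\geq 4|N_+|>4$, so your argument as written does not go through.

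The fact you want (that $x^2$ acts fixed-point-freely on $N$, hence inverts it) is true, but the cheapest justification already uses the first part of the lemma: $\Centraliser{G}{k}$ is a $2$-group of order $q^2$, so $k$ centralises no nontrivial element of odd order, in particular no nontrivial element of $N$. Once you are willing to use that, you may as well drop the analysis of the action of $x^2$ on $N$ altogether and argue as the paper does: $|\Centraliser{K}{k}|=|\Centraliser{G}{k}\cap K|$ divides $\gcd\left(q^2,\,4(q\pm\sqrt{2q}+1)\right)=4$, and since $\gcd(|N|,4)=1$, \Cref{cor:conj} puts $k$ inside a conjugate $\ang{x}^z$, so that $x^z$ centralises $k$ and $\Centraliser{K}{k}\cong C_4$. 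The same global coprimality bound is exactly how the paper handles $H$ as well, which is why it never needs the explicit dihedral or Frobenius structure of the stabilisers.
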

\begin{proof}
Consider $G = \Sz{q}$. Note that $\Centraliser{G}{g} \subgp
Q$ where $Q$ is a Sylow $2$-subgroup of order $q^2$ 
\cite[Proposition 1]{suzuki}. Moreover, $g \in \Centre{Q}$ 
\cite[Proposition 7]{suzuki}. Therefore, $|\Centraliser{G}{g}| = |Q| = q^2$. 
\\

Next, we find $\Centraliser{H}{h} = \Centraliser{G}{h} \cap H$. Observe that 
$|\Centraliser{H}{h}| = |\Centraliser{G}{h} \cap H| \leqslant 2$ since
$|\Centraliser{G}{h}| = q^2$ and $|H| = 2(q-1)$. Note that $h \in
\Centraliser{H}{h}$. Hence, $\Centraliser{H}{h} \cong C_2$. Similarly, we have 
$|\Centraliser{K}{k}| = |\Centraliser{G}{k} \cap K| \leqslant 4$ since
$|\Centraliser{G}{k}| = q^2$ and $|K| = 4(q\pm\sqrt{2q}+1)$. Let us now write 
$K = \ang{x}:\ang{y}$, where $|x| = q\pm\sqrt{2q}+1$ and $|y| = 4$. Using 
\Cref{cor:conj}, we find that $k = (y^2)^{z}$ for some $z \in K$, whence,
$y^z \in \Centraliser{K}{k}$. Since $|y| = 4$, we have that
$\Centraliser{K}{k} \cong C_4$. 
\end{proof}

\begin{lemma}[Number of Fixed Points of an Involution]
\label{lem:fixptsinv}
Let $G = \Sz{q}$ act primitively on a set $\Omega$ and take a point $\alpha
\in \Omega$. Suppose $g \in G_{\alpha}$ is an involution. Then 
\[
|\Omega_g| = \begin{cases}
q^2/2 = 2^{4m+1} & \textrm{if $G_{\alpha} \cong D_{q-1}$}, \\
q^2/4 = 2^{4m} & \textrm{if $G_{\alpha} \cong C_{q\pm\sqrt{2q}+1}:C_4$}, \\
(q/q_0)^2 = 2^{2n_0(r_0-1)} & \textrm{if $G_{\alpha} \cong \Sz{q_0}$ where
$q_0 = 2^{n_0} > 2$} \\ 
& \quad \textrm{and $q
= q_0^{r_0}$ for some prime $r_0$.}
\end{cases}
\]
Furthermore, $\Centraliser{G}{g}$ acts transitively on $\Omega_g$, the set of 
points fixed by $g$.
\end{lemma}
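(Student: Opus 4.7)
The plan is to obtain $|\Omega_g|$ via the orbit index formula provided by \Cref{lem:cgtranss}, namely $|\Omega_g| = |\Centraliser{G}{g} : \Centraliser{G}{g} \cap G_\alpha|$, and then read off each of the three centraliser sizes directly from \Cref{lem:suzucentinv}. All of the case analysis reduces to a single observation: since the action is primitive, $G_\alpha$ is a maximal subgroup of $G$, and the structural data for its involutions has already been collected in the preceding lemmas.

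The first step is to verify the hypothesis of \Cref{lem:cgtranss}, i.e.\ that the $G$-class of $g$ does not split upon restriction to $G_\alpha$. By \Cref{lem:suzuinvs}, both $G = \Sz{q}$ and any of its maximal subgroups have exactly one conjugacy class of involutions. Therefore $g^G \cap G_\alpha$ is a single $G_\alpha$-class, namely $g^{G_\alpha}$, and \Cref{lem:cgtranss} both gives the transitivity of $\Centraliser{G}{g}$ on $\Omega_g$ and yields
\[
|\Omega_g| \;=\; \frac{|\Centraliser{G}{g}|}{|\Centraliser{G}{g} \cap G_\alpha|} \;=\; \frac{|\Centraliser{G}{g}|}{|\Centraliser{G_\alpha}{g}|}.
\]

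Next I would substitute the centraliser orders. \Cref{lem:suzucentinv} gives $|\Centraliser{G}{g}| = q^2$ throughout. In the case $G_\alpha \cong D_{q-1}$, the same lemma yields $|\Centraliser{G_\alpha}{g}| = 2$, so $|\Omega_g| = q^2/2 = 2^{4m+1}$. In the case $G_\alpha \cong C_{q\pm\sqrt{2q}+1}:C_4$, it yields $|\Centraliser{G_\alpha}{g}| = 4$, so $|\Omega_g| = q^2/4 = 2^{4m}$. For the subfield case $G_\alpha \cong \Sz{q_0}$, the trick is to apply \Cref{lem:suzucentinv} recursively to the smaller Suzuki group $\Sz{q_0}$ itself: any involution in $\Sz{q_0}$ has centraliser of order $q_0^2$ inside $\Sz{q_0}$, so $|\Centraliser{G_\alpha}{g}| = q_0^2$ and $|\Omega_g| = (q/q_0)^2 = 2^{2n_0(r_0-1)}$, using $q = q_0^{r_0}$.

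There is no real obstacle here; the result is essentially a bookkeeping application of the prior lemmas. The only point worth highlighting is that the non-splitting of conjugacy classes is the hypothesis that needs checking before \Cref{lem:cgtranss} can be invoked, and this is exactly the content of \Cref{lem:suzuinvs} applied to the maximal subgroup $G_\alpha$.
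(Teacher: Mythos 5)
Your proposal is correct and follows essentially the same route as the paper: use \Cref{lem:suzuinvs} to conclude $g^G \cap G_\alpha = g^{G_\alpha}$, invoke \Cref{lem:cgtranss} for transitivity of $\Centraliser{G}{g}$ on $\Omega_g$ and the index formula, then substitute the centraliser orders from \Cref{lem:suzucentinv}. Your explicit remark that the subfield case follows by applying the $q^2$ centraliser formula to $\Sz{q_0}$ itself is exactly what the paper leaves implicit.
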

\begin{proof}
Since $G_{\alpha}$ contains a single class of involutions
(\Cref{lem:suzuinvs}), we conclude that $g^G \cap G_{\alpha} =
g^{G_{\alpha}}$. Thus, applying \Cref{lem:cgtranss}, we find that
$\Centraliser{G}{g}$ acts transitively on $\Omega_g$ and $|\Omega_g| =
|\Centraliser{G}{g}: \Centraliser{G}{g} \cap G_{\alpha}|$. 
Using \Cref{lem:suzucentinv}, we obtain the desired formulae for
$|\Omega_g|$. 
\end{proof}

\subsection{Suzuki Groups Acting on Generalised Quadrangles}
We now prove \Cref{thm:main} in the case where the socle is isomorphic to 
$\Sz{q}$.

\begin{proof}
Suppose that an almost simple group with socle $\Sz{q}$ acts primitively on 
the point-set and the line-set of a generalised quadrangle $\Gamma = 
(\cP,\cL,\cI)$ with order $(s,t)$ where $s,t \geqslant 2$. 
Then $|\cP| = (s+1)(st+1)$. Note that $s+1 \geqslant 3$ and 
$st+1 \geqslant 3$. Therefore, their product cannot be a prime. The argument 
in \cite[Section 3]{morgan} shows that the socle of this almost simple group 
acts primitively on both $\cP$ and $\cL$. Therefore, it suffices to consider 
the Suzuki group $\Sz{q}$ acting primitively on both $\cP$ and $\cL$. Let 
$G = \Sz{q}$. For a point $\alpha \in \cP$ 
and a line $L \in \cL$, their respective stabilisers $G_{\alpha}$ and $G_{L}$ 
are maximal subgroups of $G$. By \Cref{thm:suzukigroups}, a maximal subgroup 
of $\Sz{q}$ is isomorphic to one of the following:
\begin{enumerate}[(i)]
\item (Parabolic) $E_q . E_q . C_{q-1}$;
\item (Dihedral) $D_{q-1}$;
\item (Frobenius) $C_{q\pm \sqrt{2q} + 1} : C_4$; or 
\item (Subfield) $\Sz{q_0}$, where $q_0 = 2^{n_0} > 2$ and $q = q_0^{r_0}$ for 
some prime $r_0$. 
\end{enumerate}
Moreover, there is only one conjugacy class of each type of maximal subgroup, 
see also \cite[Table 8.16]{maxlowdim}. \\

Note that if $G_\alpha \cong E_q.E_q.C_{q-1}$, then by \cite{suzuki}, the 
action of $G$ is $2$-transitive. Thus, a pair of collinear points can
be mapped to a pair of non-collinear points, which is a contradiction. 
Therefore, we only need to investigate the cases: $D_{q-1}$, $C_{q\pm
\sqrt{2q}+1}:C_4$ and $\Sz{q_0}$ for the point and line-stabiliser and show
that there are no generalised quadrangles in those scenarios. To this end,
suppose $G_{\alpha}$ is isomorphic to $D_{q-1}$, $C_{q\pm\sqrt{2q}+1}:C_4$ or
$\Sz{q_0}$ and $G_{L}$ is isomorphic to $D_{q-1}$, $C_{q\pm\sqrt{2q}+1}:C_4$
or $\Sz{q_1}$, where $q_i = 2^{n_i} > 2$ and $q_i^{r_i} = q$ for primes 
$r_i$ and $i = 1,2$. 
Observe that both $G_{\alpha}$ and $G_L$ contain an involution,
say $g \in G_{\alpha}$ and $h \in G_L$. Since all the involutions in $G$ are
conjugate (\Cref{lem:suzuinvs}), there exists a $k \in G$ such that $g =
h^k$. Thus, $g \in G_L^k$. Since $G_L^k = G_{L^{\prime}}$ where 
$L^{\prime} = L^k \in \cL$, we can take $L$ to be $L^{\prime}$ and assume that 
we have an involution $g \in G_{\alpha} \cap G_L$. Let $\cP_g$ and $\cL_g$ be 
the set of points and the set of lines fixed by $g$, respectively. 
By \Cref{lem:fixptsinv}, we find that the number of fixed points and lines are 
\[
|\cP_g| = 2^a \ \textrm{and} \ |\cL_g| = 2^b,
\]
where $a \in A \defi \{4m,4m+1,2n_0(r_0-1)\}$ and $b \in
B \defi \{4m,4m+1,2n_1(r_1-1)\}$. 
By \Cref{lem:suzsub}, the
fixed substructure, $(\cP_g,\cL_g,\cI_g)$ is a generalised quadrangle of
order $(s^{\prime},t^{\prime})$ for some positive integers $s^{\prime}$ and
$t^{\prime}$. Therefore, 
\[
|\cP_g| = (s^{\prime}+1)(s^{\prime}t^{\prime}+1) = 2^a \ \textrm{and} \ 
|\cL_g| = (t^{\prime}+1)(s^{\prime}t^{\prime}+1) = 2^b.
\]
By \Cref{lem:suzupts}, we have that $b = a/2+1$ or dually, $a = b/2+1$. Let us
suppose that $b = a/2+1$. Thus, $a$ is even and so, $a = 4m$ or $2n_0(r_0-1)$. 
If $a = 4m$, then $b-1 = a/2 = 2m$. 
Hence, $b = 2m+1$, which is odd. However, the
only odd element in $B$ is $4m+1$ and so, we have a contradiction. Next, 
if $a = 2n_0(r_0-1)$, then $b = a/2+1 = n_0(r_0-1)+1$. 
However, since $r_0$ is an odd
prime, it follows that $b$ is odd. Therefore, $b = 4m+1$. Consequently, we
find $4m+1 = b = n_0r_0-n_0+1 < n_0r_0 = 2m+1$, which is a contradiction. The
case where $a = b/2+1$ is analogous. Therefore, the Suzuki 
group $\Sz{q}$ cannot act primitively on both the set of points and the set of 
lines of a generalised quadrangle. Consequently, an almost simple group with 
socle isomorphic to $\Sz{q}$ cannot act primitively on both the set of 
points and the set of lines of a generalised quadrangle.  
\end{proof}


\section{Ree Groups}
We provide some background information on the family of small Ree
groups. Throughout this section, we denote a small Ree group by $\Ree{q}$, 
where $q = 3^{2m+1}$ for some positive integer $m$. 

\subsection{Structure of Ree Groups}
We have the following theorem regarding the maximal subgroups of $\Ree{q}$
(see \cite[Table \textcolor{edited}{8.43}]{maxlowdim}). 
\begin{theorem}[Maximal Subgroups of a Small Ree Group]
\label{thm:reemaxs}
Let $G = \Ree{q}$ and $H$ be a maximal subgroup of $G$. Then \textcolor{edited}{$H$ is 
isomorphic to} one of the following: 
\begin{enumerate}[(i)]
\item $E_q.E_q.E_q.C_{q-1}$;
\item $C_2 \times \PSLn{2}{q}$;
\item $(E_4 \times D_{(q+1)/4}) : C_3$;
\item $C_{q\pm\sqrt{3q}+1}:C_6$; and 
\item $\Ree{q_0}$, where $q_0 = 3^{n_0} > 3$ and $q = q_0^{r_0}$ for some 
prime $r_0$. 
\end{enumerate}
Moreover, there is only one conjugacy class of each type of maximal subgroup.
\end{theorem}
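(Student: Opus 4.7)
The plan is to recover the classification by exploiting the rank-one BN-pair structure of $\Ree{q}$ together with a case analysis of maximal subgroups based on whether they are local or have a nonabelian simple composition factor. I would rely throughout on Steinberg's description of $\Ree{q}$ as the fixed points of a field-twisted automorphism of $G_2$ in characteristic $3$, which makes the Borel subgroup, the maximal tori, and the subfield subgroups accessible by Lang's theorem.

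First I would identify the \emph{local} maximal subgroups. Since $\Ree{q}$ has a split BN-pair of rank $1$, the Borel--Tits theorem gives a single class of proper parabolic subgroups, which is precisely the Borel, and its structure is $E_q.E_q.E_q.C_{q-1}$; this is family (i). For the prime $2$, a calculation in the $G_2$-construction (or Ree's original analysis) shows that the centraliser of an involution $t$ is $\langle t\rangle\times\PSLn{2}{q}$, and since every nontrivial $2$-subgroup of $\Ree{q}$ is contained in an elementary abelian $E_4$ whose three involutions generate it and are permuted transitively by a subgroup isomorphic to $C_3$, the normaliser of $E_4$ in $\Ree{q}$ has the shape $(E_4\times D_{(q+1)/4}):C_3$, giving families (ii) and (iii). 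For odd primes $r$ coprime to $q(q^2-1)$, the relevant Sylow subgroup is cyclic and lies in a unique maximal torus of order $q-\sqrt{3q}+1$ or $q+\sqrt{3q}+1$; these are the Ree tori, and Steinberg's formula for the torus normalisers in twisted groups yields quotient $C_6$, producing family (iv). The tori of orders $q-1$ and $q+1$ are absorbed by the Borel and the involution centraliser respectively, so they contribute no further maximal subgroups.

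Next I would handle \emph{nonlocal} maximal subgroups. By Lang's theorem, every subfield $\bF_{q_0}\subset\bF_q$ with $q=q_0^{r_0}$ and $q_0=3^{n_0}>3$ gives a $\Ree{q_0}$-subgroup, unique up to conjugacy, and the primality of $r_0$ is forced by maximality in the chain of subfield subgroups; this yields family (v). To see that no other maximal subgroup exists, I would assume $H$ is maximal and nonlocal and run the standard reduction: $H$ has a nonabelian characteristically simple socle $S$, and the list of simple subgroups of $\Ree{q}$ is very short (by Ward's work and order comparisons, $S$ must be isomorphic to $\PSLn{2}{q_0}$, $\Sz{q_0}$ cannot appear in odd characteristic, or $\Ree{q_0}$). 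The $\PSLn{2}{q_0}$ case is absorbed into family (ii), forcing $S\cong \Ree{q_0}$ and $H=\Normaliser{G}{S}=\Ree{q_0}$ by simplicity of $\Ree{q_0}$.

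For the uniqueness of conjugacy classes: the Borel is unique by BN-pair theory; the involution centraliser and the $E_4$-normaliser are unique because all involutions, and all Klein-four subgroups, fuse under $\Ree{q}$; the Ree-torus normalisers are unique by Sylow's theorem applied to the primes dividing $q\pm\sqrt{3q}+1$; and the subfield subgroups are unique by Lang--Steinberg. The main obstacle I anticipate is the nonlocal step---bounding the possible nonabelian simple sections of $\Ree{q}$. Avoiding the use of CFSG here requires Kleidman's detailed analysis of cross-characteristic embeddings into $\Ree{q}$, and in practice I would simply invoke that analysis (as the authors do through the reference to \cite[Table 8.43]{maxlowdim}) rather than reprove it from scratch.
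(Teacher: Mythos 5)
There is nothing in the paper to compare your argument against: Theorem~\ref{thm:reemaxs} is not proved in the paper at all. It is imported verbatim from the literature, with the citation \cite[Table 8.43]{maxlowdim} (the classification itself going back to Kleidman and to Levchuk--Nuzhin, building on Ward's analysis \cite{ward}). So the ``intended proof'' here is simply a reference, and your concluding admission that you would ``simply invoke that analysis'' for the hard step means your proposal ultimately reduces to the same citation, wrapped in a sketch of how the classification is actually carried out. That sketch follows the standard template (Borel--Tits for the parabolic, involution centraliser, four-group normaliser, torus normalisers, Lang--Steinberg for subfield subgroups, then elimination of other nonlocal subgroups), which is the correct shape of the real proof.

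Two concrete problems with the sketch itself. First, your 2-local analysis is wrong as stated: the Sylow $2$-subgroups of $\Ree{q}$ are elementary abelian of order $8$ (since $\lvert\Ree{q}\rvert = q^3(q^3+1)(q-1)$ with $(q+1)_2=4$ and $(q-1)_2=2$), so it is false that ``every nontrivial $2$-subgroup is contained in an elementary abelian $E_4$''; the relevant maximal subgroup is the normaliser of a four-subgroup of that $E_8$, and establishing that this normaliser is $(E_4\times D_{(q+1)/4}):C_3$ and that all four-subgroups are conjugate requires Ward's structural results rather than the containment you assert. Second, the nonlocal step --- showing that the only nonabelian simple subgroups that can arise as socles of maximal subgroups are $\PSLn{2}{q}$ (inside the involution centraliser) and the subfield groups --- is precisely the content of Kleidman's theorem; your parenthetical ``by Ward's work and order comparisons'' does not substitute for it, and you rightly fall back on citing it. Since the paper treats the whole statement as a black-box citation, none of this affects the paper, but your proposal should not be presented as an independent proof.
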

Analogous to the Suzuki groups, elements of order $3$ play a crucial role in 
studying the action of a small Ree group on a generalised quadrangle. First, 
we recall a definition from group theory. 
For a group $G$ and $g \in G$, we say that $g$ is \textit{real} in $G$ if
$g$ is conjugate to its inverse in $G$, \emph{i.e.}, there exists an element 
$h \in G$ such that $g^{-1} = g^h$.
We may omit the ``in $G$'' part and simply refer to $g$ as a real element. \\

The following lemma is useful for analysing the maximal subgroup isomorphic to
$(E_4 \times D_{(q+1)/4}) : C_3$. 
\begin{lemma}
\label{lem:ycentinv}
Let $H = KR \cong (E_4 \times D_{(q+1)/4}) : C_3$ where $K \cong E_4 \times
D_{(q+1)/4}$ and $R = \ang{y} \cong C_3$. Then $y$ centralises an involution
in $H$. 
\end{lemma}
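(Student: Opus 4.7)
The plan is to exhibit a fixed involution of $y$ acting on the involutions of $H$ by conjugation via a short counting argument. I would compute the total number of involutions of $H$ and show it is not divisible by $3$; since $\ang{y}$ acts on this set in orbits of size $1$ or $3$, this forces at least one fixed point, which is precisely an involution centralised by $y$.

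First, I would verify that every involution of $H$ already lies in $K$. Writing a general element of $H = K \rtimes R$ uniquely as $kr$ with $k \in K$ and $r \in R = \ang{y}$, the identity $(kr)^2 = k(rkr^{-1})r^2 \in Kr^2$ shows that $(kr)^2 = 1$ forces $r^2 = 1$, hence $r = 1$ because $|r|$ divides $3$. Next, I would determine the parity of $(q+1)/4$: since $3^2 \equiv 1 \pmod 8$, we have $q = 3^{2m+1} \equiv 3 \pmod 8$, so $(q+1)/4$ is odd. Thus $D_{(q+1)/4}$ has no central involution, and its only involutions are its $(q+1)/4$ reflections, while $E_4$ contributes $3$ involutions of its own.

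Counting pairs $(a,b) \in E_4 \times D_{(q+1)/4}$ with $a^2 = b^2 = 1$ and $(a,b) \neq (1,1)$, the number of involutions of $K$ is
\[
3 + \frac{q+1}{4} + 3\cdot\frac{q+1}{4} = q + 4.
\]
Since $3$ divides $q$, this total is congruent to $1 \pmod 3$. Therefore the conjugation action of $\ang{y}$ on the set of involutions has a number of fixed points congruent to $1 \pmod 3$, and in particular at least one. Any such fixed involution is centralised by $y$, completing the argument.

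I do not anticipate any serious obstacle; the argument is purely a short count combined with a mod-$3$ orbit argument. The only potentially delicate step is the $2$-adic arithmetic establishing that $(q+1)/4$ is odd, which is what ensures $D_{(q+1)/4}$ contributes exactly $(q+1)/4$ involutions (no extra central one) so that the count lands at $q+4 \equiv 1 \pmod 3$ rather than at a multiple of $3$.
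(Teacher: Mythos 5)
Your proof is correct and follows essentially the same route as the paper: all involutions of $H$ lie in $K$, the count of involutions is $q+4$ (using that $(q+1)/4$ is odd), and since $3 \nmid q+4$ the conjugation action of $\ang{y}$ must fix an involution. The only cosmetic difference is that you justify ``involutions lie in $K$'' by the explicit semidirect-product computation rather than the paper's appeal to $K \nsubgp H$ and $\gcd(|K|,|R|)=1$, which is an equivalent observation.
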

\begin{proof}
Note that $\ang{y}$ acts by conjugation on the 
set of involutions in $H$. We count the number of involutions in $H$. 
Since $q+1 = 3^{2m+1}+1 = 3(3^2)^m+1 \equiv 3+1 \pmod 8 \equiv 4 \pmod 8$, it 
follows that $(q+1)/4$ is odd and thus,
$D_{(q+1)/4}$ has $(q+1)/4$ involutions. 
Since an involution in $K$ is the product of an element of $E_4$ and an
involution in $D_{(q+1)/4}$, or is just an involution in $E_4$, the number of
involutions in $K$ is $4(q+1)/4+3 = q+4$.
Now, all the involutions in $H$ are in $K$ because $K \nsubgp H$ and
$\gcd(|K|,|R|) = 1$. Therefore, $H$ has $q+4$ involutions. Focusing on the
action of $\ang{y}$ on the set of involutions, suppose that $\ang{y}$ has no
fixed points, \emph{i.e.}, $y$ does not centralise any involution. Then the
orbits of $\ang{y}$ must have length divisible by $3$. 
This implies that $q+4$ is 
divisible by $3$, which is a contradiction as $q = 3^{2m+1}$. Therefore, $y$ 
has a fixed point, \emph{i.e.}, $y$ centralises an involution. 
\end{proof}

\begin{lemma}[Conjugacy Classes of Elements of Order Three]
\label{lem:reeccls}
Let $G = \Ree{q}$ and $H$ be a maximal subgroup of $G$. Then the following
statements hold:
\begin{enumerate}[(i)]
\item In $G$, there is one conjugacy class of real elements of 
order $3$ and two conjugacy classes of non-real elements of order $3$. 

\item If $H \cong C_2 \times \PSLn{2}{q}$, 
$H \cong (E_4 \times D_{(q+1)/4}): C_3$, or $H \cong C_{q\pm\sqrt{3q}+1}:C_6$, 
then there are two $H$-conjugacy classes of non-real elements of order 
$3$.
\end{enumerate}
\end{lemma}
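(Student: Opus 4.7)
The plan is to invoke Ward's analysis \cite{ward} of the conjugacy classes of $\Ree{q}$ for part~(i): Ward showed that $G$ has exactly three conjugacy classes of elements of order $3$, one consisting of non-identity elements of the centre of a Sylow $3$-subgroup (with the largest centraliser), and two further classes of elements in the derived subgroup of a Sylow $3$-subgroup but outside its centre (with much smaller centralisers). The first class is real, since an involution in its centraliser inverts a generator, while the other two are interchanged by the inverse map yet not fused in $G$, and hence are non-real.

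For part~(ii) I would argue case by case. If $H \cong C_2 \times \PSLn{2}{q}$, then every element of order $3$ lies in the $\PSLn{2}{q}$ factor since $C_2$ contributes none. A standard fact is that unipotent elements of $\PSLn{2}{q}$ with $q$ odd form two conjugacy classes parameterised by $\bF_q^{\times}/(\bF_q^{\times})^2$, and these are interchanged by inversion precisely when $-1 \notin (\bF_q^{\times})^2$. Since $q = 3^{2m+1} \equiv 3 \pmod 4$, this holds, and the two classes remain distinct in $H$ because $C_2$ centralises $\PSLn{2}{q}$.

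If $H \cong (E_4 \times D_{(q+1)/4}):C_3$, then \Cref{lem:ycentinv} places a generator $y$ of the Sylow $3$-subgroup inside $\Centraliser{G}{t} \cong C_2 \times \PSLn{2}{q}$ for some involution $t$, reducing the identification of the relevant $G$-classes to the previous case; additionally, writing $H = K\langle y\rangle$ with $K = E_4 \times D_{(q+1)/4}$, any $H$-conjugate of $y$ has the form $kyk^{-1}$ for some $k \in K$ and so projects to the coset $yK$ in $H/K \cong C_3$, whereas $y^{-1}$ projects to $y^2 K \neq yK$, forcing $y^H \neq (y^{-1})^H$. If $H \cong C_{q\pm\sqrt{3q}+1}:C_6$, then the Sylow $3$-subgroup of $H$ has order $3$ because $q \pm \sqrt{3q}+1 \equiv 1 \pmod 3$, and a direct computation with the semidirect-product relation $yxy^{-1} = x^a$ (with $x$ generating the cyclic kernel) shows that every $H$-conjugate of $y$ lies in the coset $y\langle x\rangle$, whereas $y^{-1}$ lies in the distinct coset $y^2\langle x\rangle$, again forcing $y^H \neq (y^{-1})^H$.

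The main obstacle is upgrading non-realness in $H$ to non-realness in $G$, since a priori some element of $G \setminus H$ could invert $y$ and place $y$ in the real $G$-class of part~(i). The plan to rule this out is via centraliser comparison with Ward's data: a real order-$3$ element of $G$ has centraliser of order $\tfrac{3q(q^2-1)}{2}$, containing a full copy of $\PSLn{2}{q}$, whereas explicit computation of $\Centraliser{H}{y}$ in each candidate subgroup yields a group whose order and structure are incompatible with sitting inside $\langle y\rangle \times \PSLn{2}{q}$ consistently with $H$ being maximal in $G$. This pins each pair $y^H, (y^{-1})^H$ inside the two distinct non-real $G$-classes, yielding exactly two $H$-conjugacy classes of non-real elements of order $3$ in each case.
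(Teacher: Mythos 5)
There is a genuine gap, and it sits exactly where you locate "the main obstacle": upgrading non-realness from $H$ to $G$. Your proposed resolution rests on incorrect data. By Ward (and as used in \Cref{lem:reecent}), the real class of order-$3$ elements of $G = \Ree{q}$ is the class lying in the centre of a Sylow $3$-subgroup, and its centraliser \emph{is} that Sylow $3$-subgroup, of order $q^3$ and in particular of odd order; it is not a group of order $3q(q^2-1)/2$ containing a copy of $\PSLn{2}{q}$ (you appear to be conflating it with the involution centraliser $C_2 \times \PSLn{2}{q}$). Your "incompatibility" criterion also fails on its own terms: for instance in the case $H \cong C_{q\pm\sqrt{3q}+1}:C_6$ one has $\Centraliser{H}{y} \cong C_6$, which embeds perfectly well in a group of the shape $C_3 \times \PSLn{2}{q}$, so no contradiction is produced, and nothing you compute pins the $H$-classes inside the two non-real $G$-classes. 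Relatedly, your justification of part (i) that the central class "is real since an involution in its centraliser inverts a generator" is self-contradictory (an involution in the centraliser commutes with, rather than inverts, the element) and impossible anyway since that centraliser is a $3$-group; realness of that class and non-realness of the other two should simply be quoted from Ward, as the paper does.

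The repair is close to what you already have, and it is the paper's route: Ward's analysis shows that an order-$3$ element of $G$ is non-real precisely when it centralises an involution (equivalently, the unique real class has centraliser of odd order $q^3$, so any order-$3$ element commuting with an involution cannot be real). In each of your three cases such an involution is at hand: the central $C_2$ when $H \cong C_2 \times \PSLn{2}{q}$; the involution provided by \Cref{lem:ycentinv} when $H \cong (E_4 \times D_{(q+1)/4}){:}C_3$ (you invoke that lemma, but for a different and unnecessary purpose); and the central involution of the $C_6$ complement in the last case. Combined with "at most two $H$-classes of order-$3$ elements" --- which for the last two cases follows from Sylow's theorem or \Cref{cor:conj} since the Sylow $3$-subgroup of $H$ has order $3$ (you state this only for the $C_6$ case; your coset argument by itself gives only that $y$ is not $H$-conjugate to $y^{-1}$, not that every order-$3$ element is conjugate to $y^{\pm 1}$) --- and with the observation that a non-real element cannot be $H$-conjugate to its inverse, one gets exactly two $H$-classes of non-real elements, which is the desired conclusion. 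Your treatment of the $C_2 \times \PSLn{2}{q}$ case (two unipotent classes, swapped by inversion since $q \equiv 3 \pmod 4$) is fine as far as it goes, but it only yields non-realness in $H$, so it too ultimately needs the involution-centralising criterion to conclude.
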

\begin{proof}
First, we find that the centre of a
Sylow $3$-subgroup of $G$ contains one conjugacy class of elements with order
$3$ with representative labelled $X$ \cite[Chapter III, Paragraph 4]{ward}. 
Also, in \cite[Chapter III, Paragraph 7]{ward}, we find that there are two 
conjugacy classes of elements of order $3$ with representatives labelled $T$ 
and $T^{-1}$. This yields (i). \\

Now we focus on (ii). From \cite[Chapter III, Paragraphs 1 and 3]{ward}, we
deduce that an order $3$ element in $G$ is not real precisely when it 
centralises an involution. Therefore, it suffices to show the following two
points: 
\begin{enumerate}[(a)]
\item An element of order $3$ in $H$ centralises an involution.
\item There are at most two conjugacy classes of elements with order $3$ in 
$H$.
\end{enumerate}
Indeed, once we have established that there are at most two conjugacy classes, 
it follows that we have exactly two conjugacy classes since the elements are 
not real. \\

Let us suppose $H \cong C_2 \times \PSLn{2}{q}$. Note that all the
order $3$ elements of $H$ are in $\PSLn{2}{q}$. Therefore, the involution in
$C_2$ is centralised by the order $3$ elements of $H$. Also, there are exactly
two conjugacy classes of elements with order $3$ in $\PSLn{2}{q}$
(see \cite[Chapter 3, (6.3) (iii)]{psl}) and hence, two conjugacy classes in 
$H$. \\

Next, we consider $H = KR \cong (E_4 \times D_{(q+1)/4}):C_3$ where $K \cong
E_4 \times D_{(q+1)/4}$ and $R = \ang{y} \cong C_3$. Observe that 
$\gcd(|E_4 \times D_{(q+1)/4}|, |C_3|) =
\gcd(2(q+1),3) = 1$. Hence, by \Cref{cor:conj}, any element of order $3$ must
be conjugate in $H$ to $y$ or $y^{-1}$. So there are at most two conjugacy
classes of elements with order $3$. Furthermore, $y$ centralises an involution
in $H$ by \Cref{lem:ycentinv}. \\
 
Finally, let us suppose that $H = KR \cong C_{q\pm\sqrt{3q}+1}:C_6$ where 
$K \cong
C_{q\pm\sqrt{3q}+1}$ and $R = \ang{y} \cong C_6$. We find that 
$\gcd(|C_{q\pm\sqrt{3q}+1}|,|C_6|) = \gcd(q\pm\sqrt{3q}+1,6) = 1$ since 
$q\pm\sqrt{3q}+1 \equiv 1\pm1+1 \pmod 2
\equiv 1 \pmod 2$ and $q\pm\sqrt{3q}+1 \equiv 1 \pmod 3$. There are two
conjugacy classes of elements with order $3$ in $R$, namely, $(y^2)^R$ and
$(y^4)^R$. Using
\Cref{cor:conj} again, we find that the number of conjugacy classes of 
elements with order $3$ in $H$ is at most $2$. Moreover, we note that $y^2$
centralises $y^3$, which is an involution. 
\end{proof}
 
\begin{lemma}[Centralisers of Elements of Order Three]
\label{lem:reecent}
Let $G = \Ree{q}$, $x \in G$ be a real element
of order $3$ and $y \in G$ be a non-real element of order $3$. Then 
\[
|\Centraliser{G}{x}| = q^3 \ \textrm{and} \ |\Centraliser{G}{y}| = 2q^2.
\]
Consider maximal subgroups $H_1$, $H_2$ and $H_3$ of $G$ where 
$H_1 \cong C_2 \times \PSLn{2}{q}$, $H_2 \cong (E_4 \times D_{(q+1)/4}):C_3$ 
and $H_3 \cong C_{q\pm\sqrt{3q}+1}:C_6$. Suppose $h_i \in H_i$ are elements of
order $3$ for $i \in \{1,2,3\}$. Then 
\[
|\Centraliser{H_1}{h_1}| = 2q, \ |\Centraliser{H_2}{h_2}| = 6 \
\textrm{and} \ |\Centraliser{H_3}{h_3}| = 6.
\]
\end{lemma}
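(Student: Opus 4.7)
The statement splits cleanly into the group-level computation (the values of $|\Centraliser{G}{x}|$ and $|\Centraliser{G}{y}|$) and the three subgroup-level computations. I would attack these separately.

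For the group-level part, the plan is simply to invoke Ward \cite{ward}. The real class of order $3$ is represented by an element $X$ in the centre of a Sylow $3$-subgroup $Q$ of order $q^3$, and Ward shows (Chapter III, Paragraph 4) that $\Centraliser{G}{X} = Q$, yielding $|\Centraliser{G}{x}| = q^3$. For the non-real class, Ward (Chapter III, Paragraph 1 and 7) describes the centraliser $\Centraliser{G}{T}$, from which $|\Centraliser{G}{y}| = 2q^2$ is read off.

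For the subgroup-level part, I would handle $H_1$ directly and $H_2, H_3$ uniformly. For $H_1 \cong C_2 \times \PSLn{2}{q}$, the $C_2$ factor has no order $3$ element, so $h_1 \in \PSLn{2}{q}$; since $q$ is a power of $3$, $h_1$ is a unipotent element and its centraliser in $\PSLn{2}{q}$ is a Sylow $3$-subgroup of order $q$, so $|\Centraliser{H_1}{h_1}| = 2q$. For $H_2 = K \rtimes R$ with $R \cong C_3$ and for $H_3 = K \rtimes R$ with $R \cong C_6$, in both cases $\gcd(|K|,|R|) = 1$, so \Cref{cor:conj} lets me assume $h_i \in R$. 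Then $R \leqslant \Centraliser{H_i}{h_i}$. In $H_3$ this already gives the lower bound $|\Centraliser{H_3}{h_3}| \geqslant 6$, since $R \cong C_6$. In $H_2$, \Cref{lem:ycentinv} provides an involution $j \in K$ commuting with $h_2$, so $\ang{h_2, j} \cong C_6 \leqslant \Centraliser{H_2}{h_2}$, again giving at least $6$.

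For the matching upper bound, I would use that in each case $h_i$ centralises an involution, hence is non-real in $G$ by the characterisation invoked in \Cref{lem:reeccls}; so $|\Centraliser{G}{h_i}| = 2q^2 = 2 \cdot 3^{4m+2}$ by the part already proved. Since $\Centraliser{H_i}{h_i} \leqslant \Centraliser{G}{h_i}$, its order divides $\gcd(|H_i|, 2q^2)$. A short divisibility check closes both cases: for $H_2$, $|H_2| = 6(q+1)$ with $\gcd(q+1,3) = 1$ and $(q+1)/4$ odd (as used in \Cref{lem:ycentinv}), so the $\gcd$ equals $6$; for $H_3$, $|H_3| = 6(q\pm\sqrt{3q}+1)$ with $q\pm\sqrt{3q}+1$ coprime to $6$ (established in the proof of \Cref{lem:reeccls}), so again the $\gcd$ equals $6$.

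The only delicate point is the divisibility argument for the upper bound, which is essentially bookkeeping once one has the non-reality of $h_i$ in hand; the rest is a compilation of Ward's results, the Schur--Zassenhaus machinery already set up in \Cref{cor:conj}, and the fact that in characteristic $p$ the order-$p$ elements of $\PSLn{2}{p^k}$ are unipotent.
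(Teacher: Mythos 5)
Your proposal is correct and follows essentially the same route as the paper: Ward's results give $|\Centraliser{G}{x}| = q^3$ and $|\Centraliser{G}{y}| = 2q^2$; the $H_1$ case reduces to the order-$q$ centraliser of an order-$3$ element in $\PSLn{2}{q}$ (the paper cites Suzuki's book via $\PGLn{2}{q}$, you assert the unipotent-centraliser fact directly, which is the same content); and for $H_2,H_3$ both arguments pair the lower bound from a commuting involution (via \Cref{lem:ycentinv} or the $C_6$ complement from \Cref{cor:conj}) with the upper bound $\gcd(|H_i|,2q^2)=6$, using non-reality of $h_i$ exactly as in \Cref{lem:reeccls}. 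The only difference is that you make explicit the non-reality step that the paper uses implicitly, which is fine.
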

\begin{proof}
From \cite[Chapter III, Paragraphs 2 and 3]{ward}, we obtain
$|\Centraliser{G}{y}| = 2q^2$ and $|\Centraliser{G}{x}| = q^3$, 
respectively. \\

From \cite[Chapter 3, (6.4) (i)]{psl}, 
we find that $|\Centraliser{\PGLn{2}{q}}{x}| 
= q$ when $x$ is an element of order $3$. Since $\PSLn{2}{q}$ has index $2$ in
$\PGLn{2}{q}$ and $C \defi \Centraliser{\PGLn{2}{q}}{x}$ has odd order, it
follows that $C \subgp \PSLn{2}{q}$. Now, back to $H_1$. Since the $C_2$ in 
$H_1$ also centralises $h_1$, we obtain $|\Centraliser{H_1}{h_1}| = 2q$. \\

Next, observe that $|\Centraliser{H_2}{h_2}| =
|\Centraliser{G}{h_2} \cap H_2| \leqslant 6$ since
$\gcd(|\Centraliser{G}{h_2}|,|H_2|) = \gcd(2q^2,4 \cdot 2(q+1)/4 \cdot 6) =
\gcd(2q^2,12(q+1)) = 6$. Furthermore, $h_2$ centralises an involution in $H_2$
(\Cref{lem:ycentinv}). Therefore, $|\Centraliser{H_2}{h_2}| = 6$. \\

Finally, we consider $H_3$. By the argument as above, we see that
$\gcd(|\Centraliser{G}{h_3}|,|H_3|) = \gcd(2q^2,6(q\pm\sqrt{3q}+1)) = 6$,
whence,
$|\Centraliser{H_3}{h_3}| = |\Centraliser{G}{h_3} \cap H_3| \leqslant 6$. 
Since $h_3$ lies in some subgroup isomorphic to $C_6$ in $H_3$ 
(\Cref{cor:conj}), it follows that $h_3$ is
centralised by $6$ elements. Therefore, $|\Centraliser{H_3}{h_3}| = 6$. 
\end{proof}

\begin{lemma}[Number of Fixed Points of Order Three Elements]
\label{lem:reefixpts}
Let $G = \Ree{q}$ act primitively on a set $\Omega$ and take a point $\alpha
\in \Omega$. Suppose we have a non-real element $g \in G_{\alpha}$ of order
$3$. Then
\[
|\Omega_g| = \begin{cases}
q & \textrm{if $G_{\alpha} \cong C_2 \times \PSLn{2}{q}$}, \\
q^2/3 & \textrm{if $G_{\alpha} \cong (E_4 \times D_{(q+1)/4}):C_3$ or
$G_{\alpha} \cong C_{q\pm\sqrt{3q}+1}:C_6$} \\
(q/q_0)^2 & \textrm{if $G_{\alpha} \cong \Ree{q_0}$ where $q_0 = 3^{n_0} > 3$
and $q = q_0^{r_0}$ for some prime $r_0$}.
\end{cases}
\]
Furthermore, $\Centraliser{G}{g}$ acts transitively on $\Omega_g$, the set of 
points fixed by $g$. 
\end{lemma}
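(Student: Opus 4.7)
The plan is to apply \Cref{lem:cgtranss} to $g$ acting on $\Omega$. That lemma says $\Centraliser{G}{g}$ acts transitively on $\Omega_g$ precisely when the $G$-conjugacy class of $g$ meets $G_\alpha$ in a single $G_\alpha$-class, and in that case $|\Omega_g| = |\Centraliser{G}{g}:\Centraliser{G}{g}\cap G_\alpha|$. So once I verify non-splitting, the computation reduces to dividing two centraliser orders, both of which are already recorded in \Cref{lem:reecent}.

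To show $g^G \cap G_\alpha = g^{G_\alpha}$, I would argue as follows. By \Cref{lem:reeccls}(i), $G$ has exactly two conjugacy classes of non-real order-$3$ elements, namely $g^G$ and $(g^{-1})^G$. In each of the first three point-stabiliser cases, \Cref{lem:reeccls}(ii) says $G_\alpha$ also has exactly two $G_\alpha$-classes of non-real order-$3$ elements; since $g$ is non-real in $G$ it is \emph{a fortiori} non-real in the subgroup $G_\alpha$, so $g^{G_\alpha}$ and $(g^{-1})^{G_\alpha}$ are distinct $G_\alpha$-classes and must therefore be \emph{all} of them. These two $G_\alpha$-classes sit inside the distinct $G$-classes $g^G$ and $(g^{-1})^G$ respectively, forcing $g^G \cap G_\alpha = g^{G_\alpha}$. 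For the subfield case $G_\alpha \cong \Ree{q_0}$, the same reasoning applies using \Cref{lem:reeccls}(i) for $\Ree{q_0}$ in place of (ii): $\Ree{q_0}$ has two $G_\alpha$-classes of non-real order-$3$ elements, and $g$ is non-real in $G_\alpha$ because any conjugator for $g^{-1} = g^h$ with $h \in G_\alpha \leq G$ would realise $g$ as real in $G$.

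With non-splitting established, the formula for $|\Omega_g|$ is just $|\Centraliser{G}{g}|/|\Centraliser{G_\alpha}{g}|$. Using $|\Centraliser{G}{g}| = 2q^2$ from \Cref{lem:reecent}, I would read off the four cases: $2q^2/(2q) = q$ when $G_\alpha \cong C_2 \times \PSLn{2}{q}$; $2q^2/6 = q^2/3$ when $G_\alpha \cong (E_4 \times D_{(q+1)/4}):C_3$ or $C_{q\pm\sqrt{3q}+1}:C_6$; and $2q^2/(2q_0^2) = (q/q_0)^2$ for the subfield case, where $|\Centraliser{G_\alpha}{g}| = 2q_0^2$ comes from applying the $\Centraliser{G}{\cdot}$ formula of \Cref{lem:reecent} inside $\Ree{q_0}$ to the non-real order-$3$ element $g$.

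I expect the only subtle point to be the non-splitting argument in the subfield case, since \Cref{lem:reeccls}(ii) is not stated for $\Ree{q_0}$; the fix is the short observation above that non-realness transfers from $G$ to any subgroup containing $g$, which then lets me feed $\Ree{q_0}$ back into part (i) of that lemma. Everything else is bookkeeping against the two preceding lemmas.
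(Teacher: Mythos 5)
Your proof is correct and follows essentially the same route as the paper: establish $g^G \cap G_{\alpha} = g^{G_{\alpha}}$ from the conjugacy class counts of \Cref{lem:reeccls}, apply \Cref{lem:cgtranss}, and then divide the centraliser orders recorded in \Cref{lem:reecent}. You are in fact slightly more explicit than the paper in the subfield case, where \Cref{lem:reeccls}(ii) and \Cref{lem:reecent} are not formally stated for $\Ree{q_0}$: your observation that non-realness passes from $G$ down to $G_{\alpha}$, so that part (i) and the centraliser formula can be applied inside $\Ree{q_0}$, supplies exactly the bookkeeping the paper leaves implicit.
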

\begin{proof}
By \Cref{lem:reeccls}, $G_{\alpha}$ has precisely two conjugacy classes
of order $3$ non-real elements. Since $G$ also has two conjugacy classes of
these elements, the conjugacy classes do not split, \emph{i.e.}, $g^{G} \cap
G_{\alpha} = g^{G_{\alpha}}$. Thus, applying \Cref{lem:cgtranss}, we find that
$\Centraliser{G}{g}$ acts transitively on $\Omega_g$ and $|\Omega_g| =
|\Centraliser{G}{g}: \Centraliser{G}{g} \cap G_{\alpha}|$. 
Using \Cref{lem:reecent}, we obtain the desired formulae for
$|\Omega_g|$. 
\end{proof}

\subsection{Ree Groups Acting on Generalised Quadrangles}
We now prove \Cref{thm:main} in the case where the socle is isomorphic to 
$\Ree{q}$. 
\begin{proof}
Suppose that an almost simple group with socle $\Ree{q}$ acts primitively on 
the point-set and the line-set of a generalised quadrangle $\Gamma = 
(\cP,\cL,\cI)$ with order $(s,t)$ where $s,t \geqslant 2$. 
Then $|\cP| = (s+1)(st+1)$. Note that $s+1 \geqslant 3$ and 
$st+1 \geqslant 3$. Therefore, their product cannot be a prime. The argument 
in \cite[Section 4]{morgan} shows that the socle of this almost simple group 
acts primitively on both $\cP$ and $\cL$. Therefore, it suffices to consider 
the Ree group $\Ree{q}$ acting primitively on both $\cP$ and $\cL$. Let 
$G = \Ree{q}$. For a point $\alpha \in \cP$ and a line $L \in 
\cL$, their respective stabilisers $G_{\alpha}$ and $G_L$ are maximal 
subgroups of $G$. 
We use the list of maximal subgroups from \Cref{thm:reemaxs}. \\

If $G_{\alpha} \cong E_q.E_q.E_q.C_{q-1}$, then the action of $G$ is 
$2$-transitive \cite[Theorem (v)]{ward}. Thus, a pair of collinear points can
be mapped to a pair of non-collinear points, which is a contradiction.  
Therefore, we only need to investigate the cases: $C_2 \times \PSLn{2}{q}$, 
$(E_4 \times D_{(q+1)/4}):C_3$, $C_{q\pm\sqrt{3q}+1}:C_6$ and $\Ree{q_0}$ for 
the point and line-stabiliser and show
that there are no generalised quadrangles in these scenarios. To this end,
suppose $G_{\alpha}$ is isomorphic to $C_2 \times \PSLn{2}{q}$, $(E_4 \times
D_{(q+1)/4}):C_3$, $C_{q\pm\sqrt{2q}+1}:C_4$ or
$\Ree{q_0}$ and $G_{L}$ is isomorphic to $C_2 \times \PSLn{2}{q}$, 
$(E_4 \times D_{(q+1)/4}):C_3$, $C_{q\pm\sqrt{2q}+1}:C_4$ or $\Ree{q_1}$, 
where $q_i = 3^{n_i} > 3$ and $q_i^{r_i} = q$ for primes $r_i$ and $i = 1,2$. 
By \Cref{lem:reeccls}, 
both $G_{\alpha}$ and $G_L$ contain a non-real element of order
$3$, say $g \in G_{\alpha}$ and $h \in G_L$. Moreover, there are 
precisely two classes of non-real elements of order $3$ in $G$, and so 
$g$ is conjugate to 
$h$ or $h^{-1}$. Without loss of generality, suppose $g$ is conjugate to $h$,
and so, we can write $g = h^k$ for some $k \in G$. Hence, $g \in G_L^k$. 
Since $G_L^k = G_{L^{\prime}}$ where 
$L^{\prime} = L^k \in \cL$, we can take $L$ to be $L^{\prime}$ and assume that 
we have an order $3$ non-real element $g \in G_{\alpha} \cap G_L$. Let 
$\cP_g$ and $\cL_g$ be the set of points and the set of lines fixed by $g$, 
respectively. By \Cref{lem:reefixpts}, we find that
the number of fixed points and lines are 
\[
|\cP_g| = 3^a \ \textrm{and} \ |\cL_g| = 3^b,
\]
where $a \in A \defi \{2m+1,4m+1,2n_0(r_0-1)\}$ and $b \in
B \defi \{2m+1,4m+1,2n_1(r_1-1)\}$. 
By \Cref{lem:suzsub}, the
fixed substructure, $(\cP_g,\cL_g,\cI_g)$ is a generalised quadrangle of
order $(s^{\prime},t^{\prime})$ for some positive integers $s^{\prime}$ and
$t^{\prime}$. Therefore, 
\[
|\cP_g| = (s^{\prime}+1)(s^{\prime}t^{\prime}+1) = 3^a \ \textrm{and} \ 
|\cL_g| = (t^{\prime}+1)(s^{\prime}t^{\prime}+1) = 3^b.
\]
However, there are no solutions for $s^{\prime}$ and $t^{\prime}$ by
\Cref{lem:suzupts} and so, we have a contradiction. Therefore, the small 
Ree group $\Ree{q}$ cannot act primitively on both the set of points and 
the set of 
lines of a generalised quadrangle. Consequently, an almost simple group with
socle isomorphic to $\Ree{q}$ cannot act primitively on both the set of points
and the set of lines of a generalised quadrangle. 
\end{proof}



\subsection*{Acknowledgements}
Vishnuram Arumugam received support from the Australian Research Training Program.
The authors thank an anonymous referee for suggesting a slicker proof of \Cref{lem:suzupts}.

\hypertarget{ref}{}
\printbibliography[heading=bibintoc, title={References}]
\end{document}